\newtheorem{theorem}{Theorem}
\newtheorem{definition}[theorem]{Definition}
\newtheorem{proposition}[theorem]{Proposition}
\theoremstyle{definition}
\newtheorem{example}[theorem]{Example}
\newcommand{\R}{\mathbb{R}}
\renewcommand{\P}{\mathcal{P}}
\newcommand{\M}{\mathcal{M}}
\newcommand{\transpose}{{\raisebox{\depth}{\footnotesize
      $\intercal$}}} 
\newcommand{\spc}[1]{%
  \R^{#1}
}
\newcommand{\set}[1]{%
  \left\lbrace {#1}\right\rbrace
}
\newcommand{\setE}[2]{%
  \left\lbrace\, {#1} \,\middle|\, {#2}\,\right\rbrace
}
\newcommand{\idnt}[1]{%
  I_{#1}
}
\newcommand{\zero}{0}
\newcommand{\rep}{re\-pre\-sen\-ta\-tion}
\newcommand{\hrep}{H-\rep}
\newcommand{\vrep}{V-\rep}
\newcommand{\prep}{P-\rep}
\DeclareMathOperator{\conv}{conv}
\DeclareMathOperator{\cone}{cone}
\DeclareMathOperator{\lin}{span}
\DeclareMathOperator{\inter}{int}
\DeclareMathOperator{\dom}{dom}
\DeclareMathOperator{\epi}{epi}
\DeclareMathOperator{\cl}{cl}
\DeclareMathOperator{\recc}{recc}
\DeclareMathOperator{\iconv}{\Box}%
\DeclareMathOperator{\ncone}{N}
\DeclareMathOperator{\lenv}{lenv}
\begin{document}
\title{Calculus of convex polyhedra and polyhedral convex 
functions by utilizing a multiple objective
linear programming solver\textsuperscript{1}}

\author{Daniel Ciripoi\textsuperscript{2}
  \and
  Andreas Löhne\textsuperscript{2}
  \and
  Benjamin Weißing\textsuperscript{2}
}

\footnotetext[1]{This research was supported by the German
Research Foundation (DFG) grant number LO--1379/7--1.}
\footnotetext[2]{Friedrich Schiller University Jena,
Department of Mathematics, 07737 Jena, Germany,
  [daniel.ciripoi\textbar andreas.loehne\textbar
   benjamin.weissing]@uni-jena.de
}
\maketitle

\begin{abstract}
The article deals with operations defined on convex polyhedra or polyhedral
convex functions. Given two convex polyhedra, operations like Minkowski sum,
intersection and closed convex hull of the union are considered. Basic
operations for one convex polyhedron are, for example, the polar,
the conical hull and the image under affine transformation. The concept of a
{\em P-representation} of a convex polyhedron is introduced. It is shown that
many polyhedral calculus operations can be expressed explicitly in terms of
P-representations. We point out that all the relevant computational effort
for polyhedral calculus consists in computing projections of convex polyhedra.
In order to compute projections we use a recent result saying that multiple
objective linear programming (MOLP) is equivalent to the polyhedral projection
problem. Based on the MOLP-solver {\em bensolve} a polyhedral calculus toolbox
for Matlab and GNU Octave is developed. Some numerical experiments are
discussed.

\medskip
\noindent
{\bfseries Keywords:} polyhedron, polyhedral set, polyhedral convex analysis,
polyhedron computations, multiple objective linear programming, \prep
\medskip

\noindent
{\bfseries MSC 2010 Classification:} 52B55, 90C29
\end{abstract}

\section{Introduction}

Convex polyhedra and polyhedral convex functions are relevant in many
disciplines of mathematics and sciences. They can be used to approximate
convex sets and convex functions with the advantage of always having finite
representations. This naturally leads to the need of a calculus, that is, a
collection of operations on one, two or even finitely many convex polyhedra
or polyhedral convex functions.

We introduce a {\em P-representation} of a convex polyhedron, where
the {`P'} stands for {`projection'},
and show that typical calculus operations can be expressed explicitly
in terms of P-representations. It turns out that
all the computational
effort in polyhedral calculus consists in computing \hrep s
(inequality representations) and/or \vrep s (representations
by finitely many points and directions) from \prep s. To this end
we utilize a multiple objective linear programming solver based on
the fact that polyhedral projection is equivalent to multiple
objective linear programming
\cite{equivalence_paper}.

Motivated by the relevance of P-representations for polyhedral calculus,
it appears to be natural to define a convex polyhedron as follows:

\begin{definition}\label{def:polyhedron}
  Given
  matrices $M \in \mathbb{R}^{q \times n}$,
  $B \in \mathbb{R}^{m \times n}$, and vectors
  \[a \in \left( \mathbb{R} \cup \left\{-\infty\right\} \right)^m,\;
  b \in \left( \mathbb{R} \cup \left\{+\infty\right\} \right)^m,\;
  l \in \left( \mathbb{R} \cup \left\{-\infty\right\} \right)^n,\;
  u \in \left( \mathbb{R} \cup \left\{+\infty\right\} \right)^n,\]
  the set 
  \begin{equation}\label{eq:prep}
    P = \setE{M x}{a \leq Bx\leq b,\, l \leq x \leq u}
  \end{equation}
  is called a {\em convex polyhedron\footnote{We solely deal with {\em convex}
  polyhedra in this paper, thus we omit the term `convex' in
  subsequent occurrences.}}.
  The tuple \(\left(M,B,a,b,l,u\right)\) is called
  {\em \prep} of the convex polyhedron $P$. 
\end{definition}

Thus, a polyhedral set is the $q$-dimensional set we obtain by mapping
the points in $\mathbb{R}^n$ which are bounded by the vectors $l$ and
$u$ and satisfy the given system of $2m$ affine inequalities given by
$a,b$ and $B$.  We use the symbol $\emptyset$ to indicate components
that do not occur. For example,
$(M,\emptyset,\emptyset,\emptyset,l,u)$ represents the polyhedron
$P = \setE{M x}{l \leq x \leq u}$ and $(M,B,a,\emptyset,l,\emptyset)$
represents $P = \setE{M x}{a \leq Bx,\; l \leq x}$.

In the literature, see e.g. \cite{rockafellar}, a polyhedron
is usually defined as an intersection of finitely many closed
half-spaces, which refers to the special case of $M$ being the
unit matrix and thus $q=n$. In this case, we have 
    \begin{equation}\label{eq:hrep}
      P = \setE{x}{a \leq Bx\leq b,\, l \leq x \leq u}
    \end{equation}
and the tuple \(\left(B,a,b,l,u\right)\) is called
{\em H-representation} of $P$. 

A simple reformulation shows that \eqref{eq:prep}
is the {\em projection} of an H-re\-pre\-sented
polyhedron
  \begin{equation}\label{eq:q}
    Q = \setE{(x,y) \in \R^n \times \R^q}{y = Mx,\, a \leq Bx\leq b,\,
	l \leq x \leq u}
  \end{equation}
onto the $y$-components, which motivates the term {\em P-representation}.

Fourier-Motzkin-Elimination, see e.g. \cite{lauritzen}, provides a tool
for eliminating the $x$-components in the following reformulation
of \eqref{eq:prep}:
\begin{equation}\label{eq:prep1}
  P = \setE{y \in \R^q}{\exists x \in \R^n:\; y = Mx,\, a \leq Bx\leq b,\,
  l \leq x \leq u}\text{.}
\end{equation}
This means that every polyhedron as defined in Definition \ref{def:polyhedron}
admits an \hrep , which justifies to define a polyhedron in an
alternative way via a \prep .

It should be pointed out that in contrast to \eqref{eq:hrep} the
H-representation of a polyhedron is usually defined as
$P=\left\{x \mid Bx \leq b\right\}$ in the literature.  The seemingly
redundant form we use in \eqref{eq:hrep}, where we distinguish between
constraints and variable bounds and where double bounds are employed,
resembles the input format of \emph{bensolve tools}.  Using this more
explicit formulation, users can specify polyhedra directly as they
appear in their respective applications without the need to
reformulate the defining system in the form $Bx \leq b$.

According to the Minkowski-Weyl theorem, every polyhedron admits a
representation in terms of points and directions: Consider a
polyhedron \(P\subseteq\spc{q}\) and let matrices $V \in
\mathbb{R}^{q \times r}$ ($r \geq 1$), $D\in \mathbb{R}^{q \times s}$
($s\geq 0$) and $L \in \mathbb{R}^{q \times t}$ ($t\geq 0$) be given,
where we write 
$D=\emptyset$ and $L=\emptyset$ if $s=0$ and $t=0$, respectively. The
vectors \(v^1,\ldots,v^r\),
\(d^1,\ldots,d^s\) and \(l^1,\ldots,l^s\) shall denote the columns of $V$,
$D$ and $L$, respectively.
If
\begin{equation}\label{eq:vrep}
  P = \conv\set{v^1,\ldots,v^r} + \cone\set{d^1,\ldots,d^s} +
  \lin \set{l^1,\ldots,l^s}
\end{equation}
holds, where we set $\cone \emptyset = \set{0}$ and
$\lin \emptyset = \set{0}$,
then \(\left(V,D,L\right)\) is called {\em
\vrep} of $P$.

Given a \vrep\ $(V,D,L)$ of a polyhedron, it is evident that
\begin{equation*}\label{eq:v2p}
\left(
  (V,D,L),
  (1_{(r)}^\transpose,0_{(s+t)}^\transpose),
  1,
  1,
\left(\begin{array}{c}
  0_{(r+s)}\\
  -\infty_{(t)}
  \end{array}\right),
  \infty_{(r+s+t)}
  \right)
\end{equation*}
is a P-representation of $P$, where $a_{(n)}$ stands for an $n$-dimensional
column vector all the $n$ components of which equal to $a$.

The problem of computing a \vrep\ for a polyhedron given in
\hrep\ is called the {\em vertex enumeration
problem}. The reverse problem is called {\em facet enumeration
problem} and can be interpreted as vertex enumeration problem under
polarity. Since an \hrep\ is a special case of a \prep ,
vertex enumeration can be seen as special case of the polyhedral
projection problem \cite{diss_benni}, which is, roughly speaking,
the problem to
compute a \vrep\ from a \prep . Analogously, the dual polyhedral projection
problem as introduced in \cite{diss_benni} covers facet enumeration.

An idea related to the one presented here is used in
\cite{matrix_cones} for treating combinatorial optimization problems.
There, the authors make use of the fact that a high~dimensional
polyhedron may have a simple structure, while some low~dimensional
projection may become quite complex.\par

Another polyhedral calculus toolbox, which also covers non-convex
polyhedra, is MPT3 \cite{MPT3}. In Section \ref{sec:num} we compare
MPT3 to our approach.

This article is organized as follows. In Section \ref{sec:polyh}
we present the results about calculus of polyhedral sets in terms
of P-representations. In Section \ref{sec:polyf}, we extend the results to
polyhedral functions. Section \ref{sec:proj} discusses how a MOLP solver 
can be utilized to compute H-representations and V-representations from 
P-representations. Section \ref{sec:num} introduces the polyhedral
calculus software {\em bensolve~tools} by discussing some numerical
experiments.

\section{Polyhedral set calculus via P-representations}\label{sec:polyh}

The notion of \prep\ as introduced in Definition \ref{def:polyhedron} allows
the explicit expression of several polyhedral calculus operations. 
We start this section by listing some results that can be proven easily by 
employing the corresponding definitions. 

\begin{proposition}[Minkowski sum] \label{prop:sum}
  Let the two polyhedra \(A^1\subseteq\spc{q}\) and
  \(A^2\subseteq\spc{q}\) with \prep s \((M^1,B^1,a^1,b^1,l^1,u^1)\)
  and \((M^2,B^2,a^2,b^2,l^2,u^2)\), respectively, be given.  Then the
  sum
  \[
    A^1 + A^2 = \setE{y\in\spc{q}}{\exists y^1 \in A^1, y^2 \in
      A^2\colon y = y^1 + y^2}
  \]
  has the \prep
  \[
    \left(
      \begin{pmatrix}
        M^1 &M^2
      \end{pmatrix},
      \begin{pmatrix}
        B^1 &\zero\\
        \zero &B^2
      \end{pmatrix},
      \begin{pmatrix}
        a^1\\
        a^2
      \end{pmatrix},
      \begin{pmatrix}
        b^1\\
        b^2
      \end{pmatrix},
      \begin{pmatrix}
        l^1\\
        l^2
      \end{pmatrix},
      \begin{pmatrix}
        u^1\\
        u^2
      \end{pmatrix}
    \right)\text{.}
  \]
\end{proposition}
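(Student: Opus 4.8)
The plan is to verify the claim directly by unpacking Definition~\ref{def:polyhedron} and exploiting the block structure of the data. Writing $x = (x^1, x^2)^\transpose$ for the stacked variable, I would first observe that the objective matrix acts as
\[
  \begin{pmatrix} M^1 & M^2 \end{pmatrix}
  \begin{pmatrix} x^1 \\ x^2 \end{pmatrix}
  = M^1 x^1 + M^2 x^2,
\]
so the images produced by the candidate \prep\ are precisely the sums of a point of the form $M^1 x^1$ and a point of the form $M^2 x^2$.

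Second, I would check that the block-diagonal constraint matrix together with the stacked bounds decouples completely: the two-sided condition
\[
  \begin{pmatrix} a^1 \\ a^2 \end{pmatrix}
  \leq
  \begin{pmatrix} B^1 & \zero \\ \zero & B^2 \end{pmatrix}
  \begin{pmatrix} x^1 \\ x^2 \end{pmatrix}
  \leq
  \begin{pmatrix} b^1 \\ b^2 \end{pmatrix}
\]
is equivalent to the conjunction of $a^1 \le B^1 x^1 \le b^1$ and $a^2 \le B^2 x^2 \le b^2$, and likewise the box constraint splits into $l^1 \le x^1 \le u^1$ together with $l^2 \le x^2 \le u^2$. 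Hence the feasible set for $x$ is exactly the Cartesian product of the feasible sets for $x^1$ and $x^2$ occurring in the given P-representations of $A^1$ and $A^2$.

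Combining the two observations, the set defined by the candidate tuple equals $\setE{M^1 x^1 + M^2 x^2}{x^1 \text{ feasible for } A^1,\; x^2 \text{ feasible for } A^2}$, and since $x^1$ and $x^2$ range independently, this is exactly $\setE{y^1 + y^2}{y^1 \in A^1,\, y^2 \in A^2} = A^1 + A^2$, which is the asserted equality.

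The verification is essentially mechanical, and I do not expect a substantial obstacle. The only point requiring a little care is the dimensional bookkeeping: confirming that $M^1$ and $M^2$ share the same number of rows $q$ so that the horizontal concatenation is well defined, and making sure that the independence of the two variable blocks is genuinely captured by the set-builder notation, i.e.\ that stacking introduces no hidden coupling between $x^1$ and $x^2$. Once this is settled, the equality of the two sets follows immediately from the decoupling of the constraints.
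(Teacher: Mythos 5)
Your proof is correct and is exactly the direct verification the paper intends: the paper leaves the proofs of Propositions \ref{prop:sum}--\ref{prop:recc} to the reader, and the expected argument is precisely your decoupling of the block-diagonal constraints into the two original feasible sets followed by the observation that $\begin{pmatrix} M^1 & M^2 \end{pmatrix}$ applied to the stacked variable yields all sums $M^1 x^1 + M^2 x^2$ with $x^1, x^2$ ranging independently. Nothing is missing; your cautionary remarks on dimensional bookkeeping are the only points of care, and you handle them correctly.
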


\begin{proposition}[intersection] \label{prop:intsec}
  Let the polyhedra \(A^1\subseteq\spc{q}\) and
  \(A^2\subseteq\spc{q}\) with \prep s
  \((M^1,B^1,a^1,b^1,l^1,u^1)\) and \((M^2,B^2,a^2,b^2,l^2,u^2)\),
  respectively, be given.  Then the intersection
  \(A^1 \cap A^2\subseteq\spc{q}\) is a polyhedron with \prep
\[
  \left(
    \begin{pmatrix}
      M^1 &\zero
    \end{pmatrix},
    \begin{pmatrix}
      B^1 &\zero\\
      \zero &B^2\\
      M^1 &-M^2
    \end{pmatrix},
    \begin{pmatrix}
      a^1\\
      a^2\\
      \zero
    \end{pmatrix},
    \begin{pmatrix}
      b^1\\
      b^2\\
      \zero
    \end{pmatrix},
    \begin{pmatrix}
      l^1\\
      l^2
    \end{pmatrix},
    \begin{pmatrix}
      u^1\\
      u^2
    \end{pmatrix}
  \right)\text{.}
\]
\end{proposition}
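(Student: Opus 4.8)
The plan is to unfold both P-representations and establish a set equality by mutual inclusion. Writing the claimed tuple with the stacked variable $x = (x^1, x^2)$, the objective matrix $\begin{pmatrix} M^1 & \zero \end{pmatrix}$ returns $M^1 x^1$, while the three row-blocks of the middle matrix yield $a^1 \le B^1 x^1 \le b^1$, then $a^2 \le B^2 x^2 \le b^2$, and finally the two-sided constraint $\zero \le M^1 x^1 - M^2 x^2 \le \zero$, i.e. the coupling equation $M^1 x^1 = M^2 x^2$. Hence the set $S$ encoded by the claimed P-representation is
\[
  S = \setE{M^1 x^1}{a^1 \le B^1 x^1 \le b^1,\; a^2 \le B^2 x^2 \le b^2,\; M^1 x^1 = M^2 x^2,\; l^1 \le x^1 \le u^1,\; l^2 \le x^2 \le u^2}\text{,}
\]
and it remains only to check that $S = A^1 \cap A^2$.

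For $S \subseteq A^1 \cap A^2$, I would take $y \in S$ with witnesses $x^1, x^2$: the constraints on $x^1$ together with $y = M^1 x^1$ place $y$ in $A^1$, the coupling equation gives $y = M^2 x^2$, and the constraints on $x^2$ then place $y$ in $A^2$. For the reverse inclusion, given $y \in A^1 \cap A^2$, membership in $A^1$ supplies a witness $x^1$ and membership in $A^2$ a witness $x^2$, each feasible for its own box and $B$-constraints; since both produce the same point, $M^1 x^1 = y = M^2 x^2$, so the coupling equation holds and $(x^1, x^2)$ is feasible for $S$ with value $y$. Finally, the fact that $A^1 \cap A^2$ is a polyhedron is automatic, since every P-representation describes a polyhedron: it is the projection of the H-represented set $Q$ of \eqref{eq:q}, which admits an H-representation by Fourier-Motzkin elimination.

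There is no genuine obstacle here; the only idea worth isolating is the encoding of the equality $M^1 x^1 = M^2 x^2$ as the two-sided inequality $\zero \le M^1 x^1 - M^2 x^2 \le \zero$ within the $B$-block, which is precisely what glues the two preimage spaces along their common image and thereby selects exactly the points belonging to both $A^1$ and $A^2$. Once this encoding is recognized, both inclusions reduce to reading off the witnesses, so the proof is a routine verification of the kind the authors leave to the reader.
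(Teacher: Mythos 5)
Your proof is correct and is precisely the routine verification the paper has in mind: the authors explicitly leave the proofs of Propositions \ref{prop:sum}--\ref{prop:recc} to the reader, and your argument---encoding $M^1x^1 = M^2x^2$ via the two-sided zero bounds in the $B$-block and checking both inclusions by reading off witnesses---is the intended one. Nothing is missing, and the final appeal to Fourier--Motzkin elimination correctly settles that the represented set is indeed a polyhedron.
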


\begin{proposition}[Cartesian product] \label{prop:cart}
    Let the polyhedra \(A^1\subseteq\spc{p}\) and
    \(A^2\subseteq\spc{q}\) with \prep s
    \((M^1,B^1,a^1,b^1,l^1,u^1)\) and \((M^2,B^2,a^2,b^2,l^2,u^2)\),
    respectively, be given. Then the Cartesian product
    \(A^1 \times A^2\subseteq\spc{p+q}\) is a polyhedron with \prep
  \[
    \left(
      \begin{pmatrix}
        M^1 &\zero\\
		\zero &M^2
      \end{pmatrix},
      \begin{pmatrix}
        B^1 &\zero\\
        \zero &B^2
      \end{pmatrix},
      \begin{pmatrix}
        a^1\\
        a^2
      \end{pmatrix},
      \begin{pmatrix}
        b^1\\
        b^2
      \end{pmatrix},
      \begin{pmatrix}
        l^1\\
        l^2
      \end{pmatrix},
      \begin{pmatrix}
        u^1\\
        u^2
      \end{pmatrix}
    \right)\text{.}
  \]
\end{proposition}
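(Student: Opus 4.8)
The plan is to verify that the proposed tuple is indeed a \prep\ of the Cartesian product by unfolding Definition~\ref{def:polyhedron} and checking that the set it describes coincides with $A^1\times A^2$. I would introduce a combined variable $x=\bigl(\begin{smallmatrix}x^1\\x^2\end{smallmatrix}\bigr)\in\spc{n_1+n_2}$, where $n_1,n_2$ are the column dimensions of $M^1,M^2$, and read off the set defined by the candidate tuple. Because the matrix $B$ is block-diagonal with blocks $B^1,B^2$ and the bound vectors $a,b,l,u$ are formed by stacking the respective components, the constraint $a\le Bx\le b,\ l\le x\le u$ decouples completely into the two independent systems $a^1\le B^1x^1\le b^1,\ l^1\le x^1\le u^1$ and $a^2\le B^2x^2\le b^2,\ l^2\le x^2\le u^2$.

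Next I would examine the image map: the block-diagonal matrix $\bigl(\begin{smallmatrix}M^1&0\\0&M^2\end{smallmatrix}\bigr)$ sends $x$ to $\bigl(\begin{smallmatrix}M^1x^1\\M^2x^2\end{smallmatrix}\bigr)\in\spc{p}\times\spc{q}$. Thus the set described by the candidate \prep\ is exactly
\[
  \setE{\begin{pmatrix}M^1x^1\\M^2x^2\end{pmatrix}}{x^1,x^2 \text{ satisfy the two decoupled systems}}\text{,}
\]
and since the two systems place no joint restriction on $x^1$ and $x^2$, this product of images equals $\set{M^1x^1 : x^1 \text{ feasible}} \times \set{M^2x^2 : x^2 \text{ feasible}}$, which is precisely $A^1\times A^2$ by the given \prep s of $A^1$ and $A^2$.

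The only genuine point requiring care — and the step I would single out as the crux — is the justification that the Cartesian product of two projection-defined sets equals the projection defined by the block-diagonal data, i.e.\ that feasibility decouples. This is immediate once one observes that no constraint in the stacked system couples $x^1$ with $x^2$; the set of feasible $x$ is a Cartesian product of the two feasible sets, and the image of a Cartesian product under a block-diagonal (hence separately-acting) linear map is the Cartesian product of the images. This contrasts with Proposition~\ref{prop:intsec}, where the extra coupling block $\begin{pmatrix}M^1&-M^2\end{pmatrix}$ is exactly what is needed to force the two images to agree; here, by design, no such coupling is present. Since all steps are elementary verifications against Definition~\ref{def:polyhedron}, the proof consists in writing out these two set equalities and invoking the decoupling, which is why it may reasonably be left to the reader.
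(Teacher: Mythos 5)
Your verification is correct, and since the paper explicitly leaves the proofs of Propositions \ref{prop:sum}--\ref{prop:recc} to the reader, your argument is precisely the intended one: the block-diagonal structure decouples the stacked constraint system into the two independent feasibility systems, and the block-diagonal matrix maps the Cartesian product of the feasible sets onto the product of their images, yielding $A^1 \times A^2$. You also correctly identify the one point of substance — the absence of any coupling constraint, in contrast to the block $\begin{pmatrix} M^1 & -M^2 \end{pmatrix}$ in Proposition \ref{prop:intsec} — so nothing is missing.
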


\begin{proposition}[recession cone] \label{prop:recc}
  Consider the polyhedron
  \(A\subseteq\spc{q}\) with \prep\  \((M,B,a,b,l,u)\). The 
  recession cone of $A$,
  \begin{align*}
    \recc A=\left\{ y \in \R^q \; | \; \forall
    \, x \in A,\, \forall \,t \geq 0 \; : \; x+ty\in A  \right\}
  \end{align*}
  has the \prep\ \((M,B,0 \cdot a,0 \cdot b,0 \cdot l,0 \cdot u)\), where
  we set $0 \cdot \pm \infty := \pm \infty$.
\end{proposition}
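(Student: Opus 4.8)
The plan is to factor the operation into two independent steps: computing the recession cone of the underlying feasible set in $x$-space, and transporting that cone through the linear map $M$. To this end, write $S := \setE{x \in \R^n}{a \leq Bx \leq b,\, l \leq x \leq u}$, so that $A = \setE{Mx}{x \in S} = MS$. I would assume $A \neq \emptyset$ (equivalently $S \neq \emptyset$); the empty case is degenerate for the stated definition of the recession cone (which yields all of $\R^q$ when $A=\emptyset$) and is excluded.

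Step 1 (recession cone of the feasible set). Since $S$ is a polyhedron in H-representation, I would invoke the standard fact (see e.g.\ \cite{rockafellar}) that for a nonempty polyhedron $\setE{x}{Cx \leq c}$ the recession cone is obtained by homogenizing the right-hand side, namely $\setE{x}{Cx \leq 0}$. Rewriting the band and box constraints of $S$ in this standard form and homogenizing, each finite bound is replaced by $0$, while each infinite bound imposes no constraint and is simply retained as infinite. This produces exactly $\recc S = \setE{x}{0\cdot a \leq Bx \leq 0\cdot b,\, 0\cdot l \leq x \leq 0\cdot u}$, with the convention $0 \cdot (\pm\infty) = \pm\infty$.

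Step 2 (commuting the linear image with the recession cone). It then suffices to show $\recc(MS) = M(\recc S)$, for then $\recc A = \recc(MS) = M(\recc S) = \setE{Mx}{x \in \recc S}$, which is precisely the claimed \prep. The inclusion $M(\recc S) \subseteq \recc(MS)$ is immediate: if $y \in \recc S$, then for a fixed $x_0 \in S$ we have $x_0 + ty \in S$ for all $t \geq 0$, hence $Mx_0 + t\,My \in MS$; since $MS$ is a polyhedron (Fourier--Motzkin elimination applied to \eqref{eq:prep1}) and therefore closed, it suffices to verify the recession property at a single base point to conclude $My \in \recc(MS)$.

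The main obstacle is the reverse inclusion $\recc(MS) \subseteq M(\recc S)$, which fails for general convex sets and genuinely relies on polyhedrality. I would establish it through the Minkowski--Weyl decomposition $S = P_0 + \recc S$, where $P_0 = \conv\set{v^1,\ldots,v^r}$ is a polytope. Applying $M$ gives $MS = M P_0 + M(\recc S)$, in which $MP_0$ is again a (bounded) polytope and $M(\recc S)$ is a polyhedral cone, hence closed. For any decomposition of a polyhedron as the sum of a polytope and a closed cone $K$, one has $\recc(\,\cdot\,) = K$: writing a point on a recession ray from a base point $c_0 + k_0$ as $c_t + k_t$ with $c_t \in MP_0$, $k_t \in K$, a scaling argument gives $z = \lim_{t\to\infty}\bigl((c_t-c_0)/t + (k_t-k_0)/t\bigr) = \lim_{t\to\infty} k_t/t \in K$, since the bounded polytope contribution divided by $t$ vanishes and $K$ is closed. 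Taking $K = M(\recc S)$ closes the argument. I expect this scaling/limit step, together with the verifications that $MP_0$ is bounded and $M(\recc S)$ is closed, to be the only non-routine part; the remainder is bookkeeping of the finite/infinite bound conventions.
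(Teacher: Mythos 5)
Your proof is correct. There is no in-paper argument to compare against: Proposition \ref{prop:recc} is among the results whose proofs the paper explicitly leaves to the reader, so your write-up supplies exactly what was omitted. Both halves of your factorization check out. Step 1 is the standard homogenization of the H-representation of $S=\setE{x}{a \leq Bx\leq b,\, l \leq x \leq u}$ (valid since you assume $S\neq\emptyset$), and it reproduces precisely the $0\cdot(\pm\infty)=\pm\infty$ bookkeeping of the statement. Step 2, the identity $\recc(MS)=M(\recc S)$, is where polyhedrality genuinely matters, and you treat it properly: the easy inclusion correctly leans on closedness of $MS$ (via Fourier--Motzkin) so that recession along a single base point suffices, and the hard inclusion via the Minkowski--Weyl decomposition $S=P_0+\recc S$ with the scaling limit $z=\lim_{t\to\infty}k_t/t$ is sound, with the two supporting facts you flag --- boundedness of $MP_0$ and closedness of the finitely generated cone $M(\recc S)$ --- being exactly the points that need checking. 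Your exclusion of the empty case is also warranted and worth noting: under the paper's quantifier-based definition, $\recc\emptyset=\R^q$ vacuously, whereas the homogenized system always admits $x=0$, so the claimed \prep\ yields a cone that is in general a proper subset of $\R^q$; the proposition therefore implicitly assumes $A\neq\emptyset$, a caveat the paper does not make explicit.
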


We close the first part of this section by an example for the Minkowski sum.
\begin{example}
  Let \(A^1\subseteq\spc{q}\) be the unit ball of the 1-norm,
  \begin{equation*}
    A^1 = \setE{y \in \spc{q}}{\sum_{i=1}^q \lvert y_i \rvert \leq
      1}\text{,}
  \end{equation*}
  and let
  \[
    A^2 = \setE{y \in \spc{q}}{-1 \leq y \leq 1}
  \]
  be the unit ball of the $\infty$-norm in \(\spc{q}\).  Both norms
  are polyhedral, thus \(A^1\) and \(A^2\) are polyhedra.  
  Since the set \(A^1\) is the convex hull of the unit
  vectors and their negatives, it can be expressed as 
  \begin{align*}
    A^1 &= \setE{y \in \spc{q}}{\sum_{i=1}^q \lvert y_i \rvert \leqslant
    1}\\
    &= \setE{x - z}{x,z\in \R^q_+ ,
    \sum_{i=1}^q x_i + z_i = 1}\text{.}
  \end{align*}
  This produces a \prep\ which is given by
  \[
    \left(
      \begin{pmatrix}
        \idnt{q} &-\idnt{q}
      \end{pmatrix},
        1^\transpose_{(2q)},
      1,
      1,
      0_{(2q)},
      \infty_{(2q)}
    \right)\text{,}
  \]
  where $\idnt{q}$ denotes the $q \times q$ unit matrix.
  Polyhedron \(A^2\) admits the \prep\
  \[(\idnt{q},\emptyset,\emptyset,\emptyset,-1_{(q)},1_{(q)})\text{.}\]
  Therefore, by
  Proposition \ref{prop:sum}, we obtain the \prep
  \[
    \left(
      \begin{pmatrix}
        \idnt{q} &-\idnt{q} &\idnt{q}
      \end{pmatrix},
      (1^\transpose_{(2q)},0^\transpose_{(q)}),
     1,1,
      \begin{pmatrix}
        \zero_{(2q)}\\
        -1_{(q)}
      \end{pmatrix},
      \begin{pmatrix}
        \infty_{(2q)}\\
        1_{(q)}
      \end{pmatrix}
    \right)
  \]
  for the polyhedron \(A^1 + A^2\).  We illustrate the sets $A^1$,
  $A^2$ and their Minkowski sum $A^1+A^2$ for $q=3$ in
  \autoref{fig:minkowski_sum}.
  \begin{figure}
    \centering
    \includegraphics[scale=0.55,trim=4.3cm 2cm 3.5cm 2cm, clip]{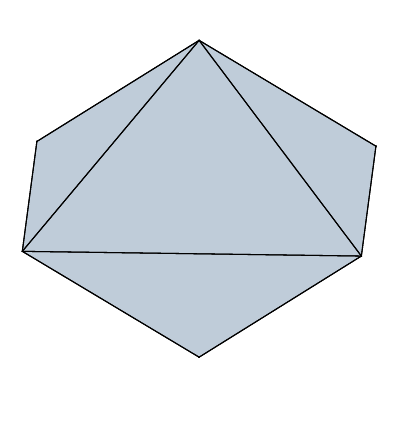}%
    \includegraphics[scale=0.34,trim=1cm 0cm 1.8cm 1cm, clip]{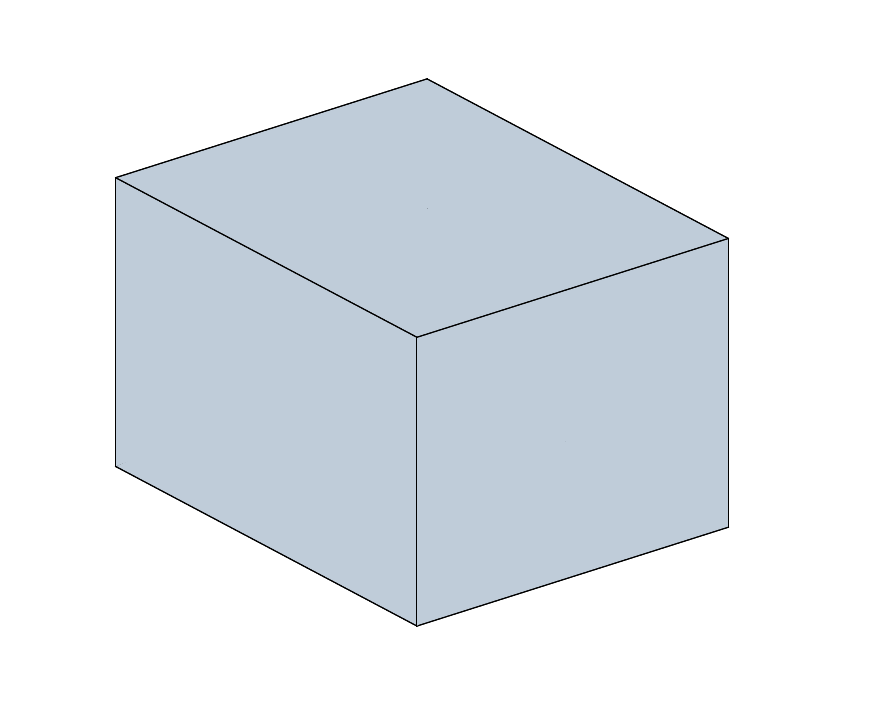}%
    \includegraphics[scale=0.48,trim=3.6cm 2.5cm 2cm 2cm, clip]{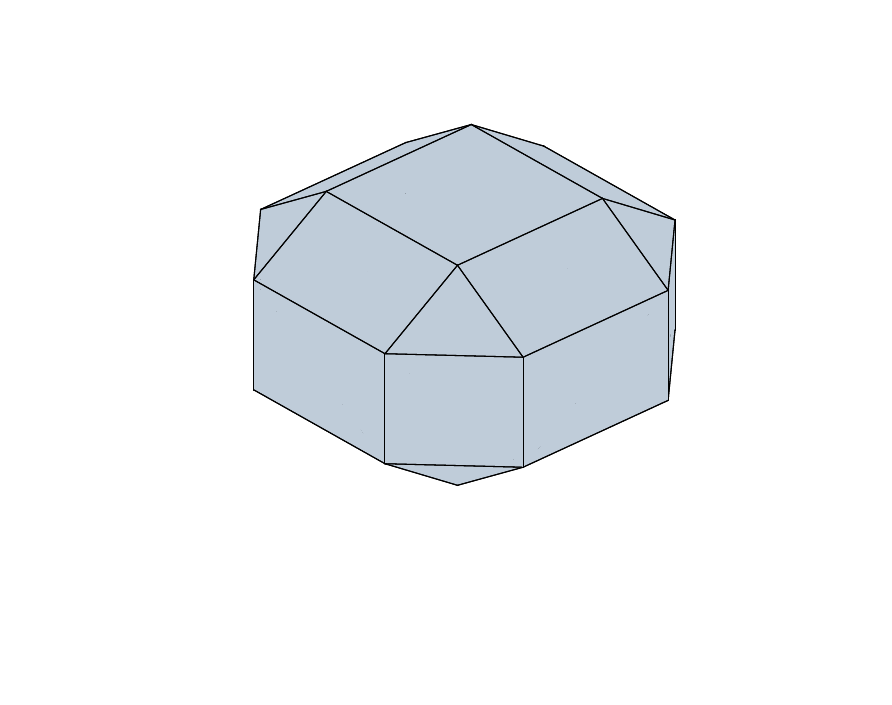}%
    \caption{\label{fig:minkowski_sum}%
      From left to right: 3-dimensional unit ball of 1-norm,
      3-dimensional unit ball of $\infty$-norm, Minkowski sum of the
      unit balls.}%
  \end{figure}
\end{example}

Other polyhedral calculus operations like computing the {\em polar}
of a nonempty polyhedron or computing the {\em closed convex hull of the
union} of finitely many polyhedra can be obtained by utilizing a variant
of the Farkas lemma like Motzkin's transposition theorem. Also in these
cases, the resulting \prep s only require transposition and
rearrangement of the given data.

\begin{theorem}[Motzkin's transposition theorem]
  \label{thm:motzkin_transposition}
  The linear system
  \begin{align*}
    B_1x < b_1, \hspace{2cm} B_2x \leq b_2, \hspace{2cm} B_3x=b_3
  \end{align*}
  has no solution if and only if there exist $z_0 \in \mathbb{R}$ and
  vectors $z_1,z_2,z_3$ such that
  \begin{align*}
    z_1^\intercal B_1 +z_2^\intercal B_2 +z_3^\intercal B_3 &= 0, \\
    z_0 +z_1^\intercal b_1 +z_2^\intercal b_2 +z_3^\intercal b_3 &=0,\\
    \begin{pmatrix}
      z_0\\z_1\\z_2
    \end{pmatrix} \geq 0 , \hspace{0.5cm} \begin{pmatrix}
      z_0\\z_1
    \end{pmatrix} &\neq 0.
  \end{align*}
\end{theorem}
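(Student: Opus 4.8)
The plan is to prove the two implications separately, with the ``only if'' (construction of the certificate) direction being the substantial one. For the easy direction I would assume the multipliers $z_0,z_1,z_2,z_3$ exist and, for contradiction, that some $x$ solves the system. Since $z_1\ge 0$, multiplying $B_1x<b_1$ componentwise and summing gives $z_1^\intercal B_1 x\le z_1^\intercal b_1$, with strict inequality whenever $z_1\neq 0$; likewise $z_2^\intercal B_2 x\le z_2^\intercal b_2$ from $z_2\ge 0$, and $z_3^\intercal B_3 x=z_3^\intercal b_3$. Adding the three and using $z_1^\intercal B_1+z_2^\intercal B_2+z_3^\intercal B_3=0$ yields $0\le z_1^\intercal b_1+z_2^\intercal b_2+z_3^\intercal b_3=-z_0$, hence $z_0\le 0$, and strictly $z_0<0$ if $z_1\neq 0$. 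As $z_0\ge 0$, the case $z_1\neq 0$ is immediately contradictory, while if $z_1=0$ then $(z_0,z_1)\neq 0$ forces $z_0>0$, again contradicting $z_0\le 0$. So no solution exists.

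For the hard direction I would first eliminate the strict inequalities: the system is feasible if and only if there are $x$ and a scalar $\varepsilon>0$ with $B_1x+\varepsilon\mathbf 1\le b_1$, $B_2x\le b_2$, $B_3x=b_3$, where $\mathbf 1$ is the all-ones vector. Infeasibility thus means the linear program $\max\{\varepsilon: B_1x+\varepsilon\mathbf 1\le b_1,\ B_2x\le b_2,\ B_3x=b_3\}$ in the variables $(x,\varepsilon)$ has optimal value $v^\star\le 0$. I would then split into two cases. If the subsystem $\{B_2x\le b_2,\ B_3x=b_3\}$ is already infeasible, the affine Farkas lemma produces $z_2\ge 0$ and a free $z_3$ with $z_2^\intercal B_2+z_3^\intercal B_3=0$ and $z_2^\intercal b_2+z_3^\intercal b_3<0$; setting $z_1=0$ and $z_0=-(z_2^\intercal b_2+z_3^\intercal b_3)>0$ gives the certificate, with $(z_0,z_1)\neq 0$ guaranteed by $z_0>0$. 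Otherwise the program is feasible, $v^\star$ is finite (unboundedness would make the original system feasible), and strong LP duality applies. The dual reads $\min\{z_1^\intercal b_1+z_2^\intercal b_2+z_3^\intercal b_3\}$ subject to $z_1^\intercal B_1+z_2^\intercal B_2+z_3^\intercal B_3=0$, $z_1^\intercal\mathbf 1=1$, $z_1,z_2\ge 0$, $z_3$ free; taking a dual optimal solution and putting $z_0=-v^\star\ge 0$ verifies all four certificate conditions, and the normalization $z_1^\intercal\mathbf 1=1$ forces $z_1\neq 0$, hence $(z_0,z_1)\neq 0$.

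The main obstacle, and where care is required, is the bookkeeping of strictness: the $\varepsilon$-reformulation is exactly what converts ``no strict solution'' into a statement about the sign of an optimal value, and it is this that makes the distinguished component $z_0$ and the nonvanishing condition on $(z_0,z_1)$ appear. The case split on feasibility of the weak-plus-equality subsystem is needed because duality presupposes primal feasibility, and it is precisely the degenerate case in which $z_1=0$ and the certificate is carried by $z_0$ alone. A fully self-contained alternative would eliminate $x_1,\dots,x_n$ by Fourier--Motzkin elimination, as referenced in the introduction, tracking the nonnegative multipliers used to combine rows: the contradictory constant inequality obtained at the end supplies $z_1,z_2,z_3$, while its strictness type supplies $z_0$. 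I expect this route to be equivalent but more notation-heavy, so I would present the duality argument.
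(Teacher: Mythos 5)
Your proof is correct, but note that there is nothing in the paper to compare it against: the paper does not prove Theorem \ref{thm:motzkin_transposition} at all, its ``proof'' being a pointer to the literature (G\"uler, Theorem 3.17/Section 3.3), since the result is classical and serves only as a tool for Propositions \ref{prop:polar} and \ref{prop:polarcone}. Your argument is a genuine, self-contained derivation modulo two standard ingredients, the affine Farkas lemma and LP strong duality. The easy direction's sign bookkeeping is right, including the two-case use of $(z_0,z_1)\neq 0$: strictness of $z_1^\intercal B_1x < z_1^\intercal b_1$ when $z_1\neq 0$, and $z_0>0$ carrying the contradiction when $z_1=0$. In the hard direction, the $\varepsilon$-reformulation correctly translates ``no strict solution'' into $v^\star\leq 0$ for $\max\{\varepsilon : B_1x+\varepsilon\mathbf{1}\leq b_1,\ B_2x\leq b_2,\ B_3x=b_3\}$; the case split is exactly what duality requires, with the degenerate case (subsystem infeasible) yielding the certificate $z_1=0$, $z_0>0$ via Farkas, and the nondegenerate case yielding $z_1\neq 0$ from the dual constraint $z_1^\intercal\mathbf{1}=1$ and closing the identity $z_0+z_1^\intercal b_1+z_2^\intercal b_2+z_3^\intercal b_3=0$ with $z_0=-v^\star\geq 0$. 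Two small points you should make explicit in a write-up: the LP in $(x,\varepsilon)$ is feasible whenever the weak-plus-equality subsystem is, because $\varepsilon$ is unconstrained below (this legitimizes the dichotomy), and unboundedness of the LP would produce a feasible point with $\varepsilon>0$ and hence contradict infeasibility of the original system, so $v^\star$ is finite and dual attainment follows from standard LP theory. As to what each route buys: the citation keeps the paper lean, whereas your proof makes the statement self-contained at the price of presupposing strong duality, whose usual proofs themselves rest on Farkas-type theorems --- so logically yours is a reduction of Motzkin's theorem to Farkas plus duality rather than a from-scratch proof, which is entirely adequate here; the Fourier--Motzkin alternative you sketch would be the truly elementary (if notation-heavy) route, and your instinct to prefer the duality argument for presentation is sound.
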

\begin{proof}
	See, for instance, {\cite[Theorem 3.17 / Section 3.3]{gueler}}.
\end{proof}

\begin{proposition}\label{prop:polar}
  The polar 
  \begin{align*}
    A^\circ=\left\{ y \in \mathbb{R}^q \;  | \;
    \forall \,  v \in A \; : \;  y^\intercal v \leq 1 \right\} \,.
  \end{align*}
   of a nonempty polyhedron $A \subseteq \spc{q}$ with \prep\ $(M,B,a,b,l,u)$,
   where $B \in \R^{m\times n}$, 
   has the \prep\ 
  \begin{align*}
    \left( \begin{pmatrix}
      0_{(q\times k)} \;\idnt{q}
    \end{pmatrix} ,
	B',
    \begin{pmatrix}
      0_{(n)} \\ - \infty
    \end{pmatrix}, 
    \begin{pmatrix}
      0_{(n)} \\ 1
    \end{pmatrix},
    \begin{pmatrix}
      0_{(k)} \\ - \infty_{(q)}
    \end{pmatrix},
      \infty_{(k+q)}
    \right)\text{,}
  \end{align*}
  where $B' \in \spc{(n+1)\times k}$ results from 
  \begin{align*}
    \begin{pmatrix}
      B^\transpose & -B^\transpose & \idnt{n} &
	   -\idnt{n} & -M^\transpose \\
      b^\intercal  & -a^\intercal & u^\intercal &
	   -l^\intercal & 0^\intercal_{(q)}
    \end{pmatrix}
  \end{align*}
  by deleting all columns with infinite
  entries.
\end{proposition}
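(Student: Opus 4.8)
The plan is to characterise membership $y \in A^\circ$ as the \emph{infeasibility} of a single linear system in the variable $x$, and then to dualise that system by Motzkin's transposition theorem (\autoref{thm:motzkin_transposition}); this is nothing but the LP-duality description of the support function of $A$, recast so that every coefficient of the resulting \prep\ reads off directly from the data. By definition $y \in A^\circ$ means $(M^\intercal y)^\intercal x \le 1$ for every $x$ with $a \le Bx \le b$ and $l \le x \le u$, which is equivalent to saying that the system
\[
  -(M^\intercal y)^\intercal x < -1,\quad Bx \le b,\quad -Bx \le -a,\quad x \le u,\quad -x \le -l
\]
has no solution $x \in \R^n$. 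Those rows of the non-strict part whose right-hand side equals $+\infty$ (that is, $b_i=+\infty$, $a_i=-\infty$, $u_j=+\infty$, or $l_j=-\infty$) are vacuous and may be dropped; this is exactly what will correspond to ``deleting the columns with infinite entries''.

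First I would apply \autoref{thm:motzkin_transposition} with $B_1=-y^\intercal M$, $b_1=-1$ as the single strict row and the four surviving blocks as the non-strict part (no equalities). This produces a scalar multiplier $z_1\ge 0$ for the strict row, nonnegative multipliers $\beta,\alpha,\delta,\gamma\ge 0$ attached to $Bx\le b$, $-Bx\le -a$, $x\le u$, $-x\le -l$, and a scalar $z_0\ge 0$, subject to the transposition identities together with $(z_0,z_1)\ne 0$. Writing these out, the first identity becomes $B^\intercal(\beta-\alpha)+(\delta-\gamma)=z_1 M^\intercal y$ and the second becomes $b^\intercal\beta-a^\intercal\alpha+u^\intercal\delta-l^\intercal\gamma=z_1-z_0\le z_1$.

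The step I expect to be the main obstacle is the normalisation $z_1=1$, and this is precisely where the hypothesis $A\ne\emptyset$ must be used, so I would argue by cases on $z_1$. If $z_1=0$ then $z_0>0$, and the two identities collapse to $B^\intercal(\beta-\alpha)+(\delta-\gamma)=0$ together with $b^\intercal\beta-a^\intercal\alpha+u^\intercal\delta-l^\intercal\gamma=-z_0<0$; pairing this combination with any feasible $x$ gives $0=(B^\intercal(\beta-\alpha)+(\delta-\gamma))^\intercal x\le b^\intercal\beta-a^\intercal\alpha+u^\intercal\delta-l^\intercal\gamma<0$, a contradiction, so the feasible set and hence $A$ would be empty. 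As $A\ne\emptyset$, we conclude $z_1>0$ and divide all multipliers by $z_1$ to assume $z_1=1$. The converse is immediate: given $\beta,\alpha,\delta,\gamma\ge 0$ with $B^\intercal(\beta-\alpha)+(\delta-\gamma)=M^\intercal y$ and objective value $\le 1$, setting $z_1=1$ and $z_0=1-(b^\intercal\beta-a^\intercal\alpha+u^\intercal\delta-l^\intercal\gamma)\ge 0$ yields a valid Motzkin certificate, so the system is infeasible and $y\in A^\circ$.

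It then remains to read off the \prep . Taking as variables the surviving multipliers (forming a nonnegative vector) together with the free vector $y\in\R^q$, the identity $B^\intercal(\beta-\alpha)+(\delta-\gamma)-M^\intercal y=0$ supplies the top $n$ rows of the constraint matrix with both bounds equal to $0$, matching the zero blocks of $a'$ and $b'$, while $b^\intercal\beta-a^\intercal\alpha+u^\intercal\delta-l^\intercal\gamma\le 1$ supplies the last row, with lower bound $-\infty$ and upper bound $1$. The column blocks are exactly $B^\intercal,\,-B^\intercal,\,\idnt{n},\,-\idnt{n}$ sitting over the bottom entries $b^\intercal,\,-a^\intercal,\,u^\intercal,\,-l^\intercal$, plus the block $-M^\intercal$ over $0^\intercal_{(q)}$ carrying $y$; discarding the vacuous multipliers removes precisely the columns with an infinite bottom entry, which is how $B'$ arises. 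Finally, since only the $y$-coordinates of the solution set are wanted, the output matrix is $\begin{pmatrix}0_{(q\times k)}&\idnt{q}\end{pmatrix}$, and the sign restrictions (multipliers $\ge 0$, $y$ free) are encoded by $l'=\begin{pmatrix}0_{(k)}\\ -\infty_{(q)}\end{pmatrix}$ and $u'=\infty_{(k+q)}$, so the data coincide with the claimed \prep .
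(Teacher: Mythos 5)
Your proof is correct and follows essentially the same route as the paper's: both characterise $y\in A^\circ$ as infeasibility of a linear system, apply Motzkin's transposition theorem, rule out the multiplier case $z_1=0$ via a pairing argument with a feasible point (this is where $A\neq\emptyset$ enters), and normalise $z_1=1$ to read off the \prep . The only cosmetic difference is that you substitute $v=Mx$ before dualising, so the strict row is $-(M^\intercal y)^\intercal x<-1$ and no equality multipliers appear, whereas the paper keeps $y=Mx$ as an equality block with multiplier $z_3$ and recovers $z_3=y$ from the dual identity --- the resulting system, and hence the \prep , is identical.
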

\begin{proof}
  We have $y \in A^\circ$ if and only if
  \begin{align*}
 v \in A \; \Rightarrow \; y^\intercal v \leq 1 \text{.}
  \end{align*}
This is equivalent to the following system being inconsistent:
  \begin{align}\label{eq:sys1}
    v=Mx,\; a \leq Bx \leq b,\; l \leq x \leq u,\;
    y^\intercal v > 1 \text{.}
\end{align}
By Theorem \ref{thm:motzkin_transposition}, \eqref{eq:sys1} has no
solution if and only if
\begin{equation}\label{eq:sys2}
  \begin{aligned}
    \begin{pmatrix} 0_{(n)} \\ -y \end{pmatrix} z_1   +\begin{pmatrix}
    B^\transpose & -B^\transpose & \idnt{n} & -\idnt{n} \\ 0 & 0& 0& 0 \end{pmatrix}
    z_2 +\begin{pmatrix} -M^\transpose \\ \idnt{q} \end{pmatrix} z_3  &=0\\
    z_0   -z_1    +\begin{pmatrix} b^\intercal & -a^\intercal &
    u^\intercal & -l^\intercal \end{pmatrix} z_2 +0_{(q)}^\intercal z_3  &=0 \\
    \begin{pmatrix}
      z_0\\z_1\\z_2
    \end{pmatrix} \geq 0 , \hspace{0.5cm} \begin{pmatrix}
      z_0\\z_1
    \end{pmatrix} &\neq 0
  \end{aligned}
\end{equation}
has a solution, where we assume that the $\pm\infty$-components of the vector
before $z_2$  and the corresponding columns of the matrix before $z_2$
have been deleted.

Consider the case where $z_1\neq 0$. Then, without loss of generality,
we can assume $z_1 = 1$ and by obtaining $y=z_3$ from the second row of the first equation in system \eqref{eq:sys2} respective system can be expressed as
  \begin{align*}
    \begin{pmatrix}
      0_{(n)} \\ - \infty
    \end{pmatrix} \leq \begin{pmatrix} B^\transpose & -B^\transpose
    & \idnt{n} & -\idnt{n} & -M^\transpose \\b^\intercal & -a^\intercal &
    u^\intercal & -l^\intercal & 0_{(q)}^\intercal
    \end{pmatrix} \begin{pmatrix}
      z_2 \\y
    \end{pmatrix} &\leq  \begin{pmatrix}
      0_{(n)} \\1
    \end{pmatrix}\\
	z_2 &\geq 0 \text{,}
  \end{align*}
  which yields the P-representation claimed for $A^\circ$.

  Finally we show that the case $z_1=0$ cannot occur. Otherwise, we get
  $z_0 > 0$ and $z_3=0$. Hence, system (\ref{eq:sys2})
  turns into
  \begin{align*}
    \begin{pmatrix} B^\transpose & -B^\transpose
    & \idnt{n}  & -\idnt{n}  \end{pmatrix} z_2 &=0 \\
    \begin{pmatrix} b^\intercal & -a^\intercal &
     u^\intercal & -l^\intercal  \end{pmatrix} z_2 &<0  \\
	 z_2 &\geq 0\text{.}
  \end{align*}
  As $A$ is assumed to be nonempty, we can choose
  an element $\bar{x}$ such that 
  \begin{align*}
    \bar{x}^\intercal \begin{pmatrix} B^\transpose &
    -B^\transpose & \idnt{n} & -\idnt{n} \end{pmatrix} \leq \begin{pmatrix}
     b^\intercal & -a^\intercal & u^\intercal & -l^\intercal
     \end{pmatrix}\text{.}
  \end{align*}
  This leads to the contradiction 
  \begin{align*}
    0= \bar{x}^\intercal \begin{pmatrix} B^\transpose &
   -B^\transpose & \idnt{n}  & -\idnt{n}  \end{pmatrix} z_2 \leq \begin{pmatrix}
    b^\intercal & -a^\intercal & u^\intercal &
   -l^\intercal \end{pmatrix} z_2 < 0\text{,}
  \end{align*}
  which completes the proof.
\end{proof}

\begin{proposition}\label{prop:polarcone}
  The polar cone 
  \begin{align*}
    A^*=\left\{ y \in \mathbb{R}^q \;  | \;
    \forall \,  x \in A \; : \;  y^\intercal x \leq 0 \right\}
  \end{align*}
   of a nonempty polyhedron $A \subseteq \spc{q}$ with \prep\ $(M,B,a,b,l,u)$
   has the \prep\ 
   \begin{align*}
     \left( \begin{pmatrix}
       0_{(q\times k)} \;\idnt{q}
     \end{pmatrix} ,
 	B',
     \begin{pmatrix}
       0_{(n)} \\ - \infty
     \end{pmatrix}, 
       0_{(n+1)},
     \begin{pmatrix}
       0_{(k)} \\ - \infty_{(q)}
     \end{pmatrix},
       \infty_{(k+q)}
     \right)\text{,}
   \end{align*}
   where $B'$ is the same matrix as in Proposition \ref{prop:polar}. 
\end{proposition}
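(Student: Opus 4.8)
The plan is to follow the proof of Proposition \ref{prop:polar} essentially verbatim, exploiting that the only difference between the polar and the polar cone is the right-hand side of the defining inequality: $y^\intercal x \leq 0$ replaces $y^\intercal x \leq 1$. First I would rewrite membership: $y \in A^*$ holds if and only if the implication $v \in A \Rightarrow y^\intercal v \leq 0$ is valid, that is, if and only if the system $v = Mx,\; a \leq Bx \leq b,\; l \leq x \leq u,\; y^\intercal v > 0$ is inconsistent. This is exactly the system from the proof of Proposition \ref{prop:polar} with the strict inequality $y^\intercal v > 1$ replaced by $y^\intercal v > 0$.

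Next I would apply Motzkin's transposition theorem (Theorem \ref{thm:motzkin_transposition}). The change $1 \mapsto 0$ enters in a single place, namely the product $z_1^\intercal b_1$ appearing in the second Motzkin equation, which now contributes $0$ instead of $-z_1$. Consequently the counterpart of system \eqref{eq:sys2} coincides with \eqref{eq:sys2} except that its second equation loses the $-z_1$ summand and reads $z_0 + (b^\intercal,\, -a^\intercal,\, u^\intercal,\, -l^\intercal)\, z_2 + 0_{(q)}^\intercal z_3 = 0$.

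I would then repeat the same case distinction on $z_1$. In the case $z_1 \neq 0$, normalizing $z_1 = 1$ leaves the first Motzkin equation unchanged, so it again forces $z_3 = y$ and couples $(z_2, y)$ through the matrix $B'$; the modified second equation yields $(b^\intercal,\, -a^\intercal,\, u^\intercal,\, -l^\intercal)\, z_2 = -z_0 \leq 0$ in place of $\leq 1$. Substituting $z_3 = y$ and absorbing the slack $z_0 \geq 0$ reproduces the claimed P-representation, with $B'$ and all remaining data identical to Proposition \ref{prop:polar}; the sole effect of the modification is that the upper bound on the final constraint row drops from $1$ to $0$, giving $b = 0_{(n+1)}$.

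The last step, which I expect to be the only point needing care, is ruling out $z_1 = 0$. This case forces $z_0 > 0$ and $z_3 = 0$, and the modified second equation produces $(b^\intercal,\, -a^\intercal,\, u^\intercal,\, -l^\intercal)\, z_2 = -z_0 < 0$ alongside $(B^\transpose,\, -B^\transpose,\, \idnt{n},\, -\idnt{n})\, z_2 = 0$ and $z_2 \geq 0$. This is verbatim the contradiction system at the end of the proof of Proposition \ref{prop:polar}, so the same argument applies: picking any $\bar x \in A$ (which exists since $A$ is nonempty) and pairing it against $z_2 \geq 0$ yields $0 \leq (b^\intercal,\, -a^\intercal,\, u^\intercal,\, -l^\intercal)\, z_2 < 0$. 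The main obstacle is thus not a new idea but confirming that the passage $1 \mapsto 0$ leaves this exclusion intact — precisely the step where nonemptiness of $A$ is indispensable.
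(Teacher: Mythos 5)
Your proposal is correct and takes exactly the route the paper intends: the paper's entire proof of Proposition \ref{prop:polarcone} is the remark that it is similar to the proof of Proposition \ref{prop:polar}, and you have carried out that adaptation faithfully, tracking the single change $1 \mapsto 0$ through the Motzkin system (the second equation losing the $-z_1$ term, hence the upper bound $0_{(n+1)}$ in place of $(0_{(n)},1)$). Your identification of the $z_1 = 0$ exclusion as the step where nonemptiness of $A$ is indispensable, and your check that it survives the modification verbatim, is precisely right.
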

\begin{proof} Similar to the proof of Proposition \ref{prop:polar}.	
\end{proof}

We now recall some well-known results on polyhedra which allow us to
express further polyhedral calculus operations in terms of
\prep s. Let $A^{\circ \circ}=\left( A^\circ \right)^\circ$ and
$A^{**}=\left( A^* \right)^*$ denote the {\em bipolar}
and the {\em bipolar cone} of a polyhedron $A$. 

\begin{proposition}[see e.g.\ {\cite{rockafellar}}]
  For nonempty polyhedra $A,A_1,A_2 \subseteq \mathbb{R}^q$ and nonempty 
  polyhedral cones $C,C_1,C_2 \subseteq \mathbb{R}^q$ one has
  \begin{enumerate}
  \item $A^{\circ \circ}= \cl \conv \left\{ A \cup
   \left\{ 0 \right\} \right\}$,
  \item If $0 \in A$, then $A^{\circ \circ} =A$,
  \item $C^{**}=C$,
  \item $\cl \cone A =A^{**}$,
  \item If $0 \in A_1 \cup A_2$, then $\cl \conv 
    \left( A_1 \cup A_2 \right)=\left( \left( A_1 \right)^\circ
    \cap \left( A_2 \right)^\circ \right)^\circ$,
  \item $C_1+C_2=\left( \left( C_1 \right)^\circ \cap
    \left( C_2 \right)^\circ \right)^\circ$.
  \end{enumerate}
\end{proposition}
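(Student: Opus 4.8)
The plan is to treat statement (i), the bipolar theorem, as the single substantial ingredient and to deduce (ii)--(vi) from it by elementary manipulations of the two polarity operators.

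For (i) I would first record the trivial inclusion. For any set $A$ the polar $A^\circ$ is an intersection of the closed half-spaces $\{y : y^\intercal x \le 1\}$, $x \in A$, hence closed and convex, and $0 \in A^\circ$. Applying this to $A^\circ$ shows that $A^{\circ\circ}$ is closed, convex and contains $0$; moreover every $x \in A$ satisfies $y^\intercal x \le 1$ for all $y \in A^\circ$, so $A \subseteq A^{\circ\circ}$. Consequently the closed convex set $A^{\circ\circ}$ contains $A \cup \{0\}$, giving $\cl\conv\{A \cup \{0\}\} \subseteq A^{\circ\circ}$. The reverse inclusion is where the work lies: writing $C = \cl\conv\{A\cup\{0\}\}$ and taking any $x_0 \notin C$, I would separate $x_0$ from the closed convex set $C$ by a vector $y$ and a scalar $\alpha$ with $y^\intercal x \le \alpha < y^\intercal x_0$ for all $x \in C$. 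Since $0 \in C$ we get $\alpha \ge 0$, and rescaling $y$ (by $1/\alpha$ if $\alpha>0$, or taking a large positive multiple if $\alpha = 0$) produces a point of $A^\circ$ whose inner product with $x_0$ exceeds $1$; hence $x_0 \notin A^{\circ\circ}$. The main obstacle is exactly this separation step together with the bookkeeping of the case $\alpha = 0$, which is precisely what forces both the closure and the adjunction of $0$ in the statement.

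With (i) in hand the rest is routine. For (ii), when $0 \in A$ the set $A \cup \{0\} = A$ is already closed and convex, being a polyhedron, so $A^{\circ\circ} = \cl\conv A = A$. For the cone statements I would first note that for a cone $C$ the two polars coincide, $C^\circ = C^*$, because $y^\intercal x \le 1$ for all $x \in C$ forces $y^\intercal x \le 0$ (otherwise scaling $x$ up violates the bound), while conversely $y^\intercal x \le 0 \le 1$. Then the cone analogue of (i), namely $K^{**} = \cl K$ for a convex cone $K$ (with $K^{**} = K$ when $K$ is closed), follows by the same separation argument with the half-spaces $\{y : y^\intercal x \le 0\}$ replacing $\{y : y^\intercal x \le 1\}$. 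This yields (iii) directly, since polyhedral cones are closed, and (iv) after observing $A^* = (\cone A)^*$, so that $A^{**} = (\cone A)^{**} = \cl \cone A$.

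Finally (v) rests on the identity $(A_1 \cup A_2)^\circ = A_1^\circ \cap A_2^\circ$, immediate from the definition since requiring $y^\intercal x \le 1$ on $A_1 \cup A_2$ is the conjunction of the two separate requirements. Taking the polar once more and invoking (i) gives $(A_1^\circ \cap A_2^\circ)^\circ = (A_1 \cup A_2)^{\circ\circ} = \cl\conv\{(A_1\cup A_2)\cup\{0\}\}$, and since $0 \in A_1 \cup A_2$ this is $\cl\conv(A_1 \cup A_2)$, as claimed. Statement (vi) is then the specialization of (v) to cones: here $\conv(C_1 \cup C_2) = C_1 + C_2$ (a convex combination of points from the two cones lies in the sum, and conversely $c_1 + c_2 = 2(\tfrac12 c_1 + \tfrac12 c_2)$), and because the sum of two polyhedral cones is again a polyhedral cone, hence closed, the closure in (v) is superfluous and $\cl\conv(C_1 \cup C_2) = C_1 + C_2$.
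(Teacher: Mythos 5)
Your proof is correct, and since the paper offers no argument of its own here --- it cites Rockafellar for these standard facts --- your derivation (strict separation to establish the bipolar theorem (i), then reducing (ii)--(vi) to it via $C^\circ = C^*$ for cones, $(A_1 \cup A_2)^\circ = A_1^\circ \cap A_2^\circ$, and closedness of polyhedral objects) is exactly the standard route taken in the cited source. One point worth making explicit: in (v) you apply (i) to $A_1 \cup A_2$, which is generally not a polyhedron, but this is harmless because your separation proof of (i) uses only that $\cl \conv \left( A \cup \left\{ 0 \right\} \right)$ is closed and convex, never polyhedrality.
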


Note that the closure operation cannot be omitted in the polyhedral case 
(take for instance $A=\set{x \in \spc{2} \mid x_2=1}$ in (i)).

As a consequence of the preceding proposition we are able to derive a
P-representation of the closed conic hull of a polyhedron by applying the
polar cone operation twice, see (iv). Furthermore, we obtain
a P-representation of the closed convex hull of the union of two polyhedra
((v) and translation). The {\em normal cone} of a polyhedron
$A \subseteq \R^q$ at a point $\bar x \in A$ is defined as the set
\begin{align*}
  \ncone_{A}(x_0)=\left\{ y \in \R^q \; |
  \; \forall\, x \in A \; : \; y^\intercal(x-\bar x) \leq 0 \right\} \text{.}
\end{align*}
It is known (see e.g. \cite{rockafellar}) that,
  \begin{align*}
    \ncone_{A}(\bar x)=  \left( \cone \left(
    A - \left\{ \bar x \right\} \right) \right)^\circ \text{.}
  \end{align*}
Thus, a \prep\ of $\ncone_{A}(\bar x)$ is obtained by combining some
of the previous results.

\section{Calculus of polyhedral convex functions}\label{sec:polyf}

A function
$f \colon \R^n \rightarrow \R \cup \left\{+ \infty \right\}$ is called
{\em polyhedral} if its epigraph
\begin{align*}
  \epi f = \left\{ (x,r) \in \R^n \times \R \; |
  \; f(x) \leq r \right\}
\end{align*}
is a polyhedron. Since all polyhedra in this article are convex,
polyhedral functions are convex, too.
The {\em domain} of $f$ is defined as
  \begin{align*}
	  \dom f = \{x \in \R^n \mid f(x) < +\infty\}\text{.}
  \end{align*} 

A polyhedral function can be represented by a \prep\ of its epigraph.
Well-known results from Convex Analysis provide 
the relationship between function operations and corresponding
epigraph operations. Thus, using our calculus for polyhedral sets
applied to the epigraphs, we can easily derive calculus operations
for polyhedral functions. If a polyhedral function $f$ 
is represented by a \prep\ $(M,B,a,b,l,u)$ of its epigraph,
then computing a function value $f(x)$ for some $x \in \R^n$
requires to solve the linear program
\begin{align*}
	\min_{r,z} \, r \quad\text{s.t.}\quad \begin{pmatrix}
		x \\ r
	\end{pmatrix}=Mz,\; a \leq B z \leq b,\; l \leq z \leq u\text{.}
\end{align*}

The next four statements combined with the results of Section \ref{sec:polyh}
provide some first calculus operations for polyhedral functions. 
Proofs can be found in Convex Analysis books such as \cite{rockafellar}.

\begin{proposition}
  Let
  $f_1,\ldots,f_k \colon \R^n \rightarrow \R \cup \left\{ + \infty
  \right\}$ be polyhedral functions. The epigraph of the
  pointwise maximum function $\max \left( f_1,\ldots,f_k \right)$
  is 
  \begin{align*}
    \epi \max \left( f_1,\ldots,f_k \right) = \bigcap_{i=1}^k \epi f_i\,.
  \end{align*} 
\end{proposition}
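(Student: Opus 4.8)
The plan is to prove the set equality by double inclusion, working directly from the definition of epigraph and the fact that the pointwise maximum $g := \max(f_1,\ldots,f_k)$ satisfies $g(x) = \max_i f_i(x)$ for every $x \in \R^n$. The key observation is that a single real number $r$ dominates the maximum of finitely many values if and only if it dominates each value individually, i.e.\ $\max_i f_i(x) \leq r \iff f_i(x) \leq r$ for all $i$.

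First I would unfold the left-hand side: a pair $(x,r) \in \R^n \times \R$ lies in $\epi g$ precisely when $g(x) \leq r$, that is, when $\max_{i} f_i(x) \leq r$. Next I would apply the elementary fact that the maximum of finitely many numbers is $\leq r$ exactly when each of them is $\leq r$, giving the equivalence
\begin{align*}
  \max_{i=1,\ldots,k} f_i(x) \leq r \quad\Longleftrightarrow\quad f_i(x) \leq r \text{ for all } i = 1,\ldots,k\text{.}
\end{align*}
Recognizing that $f_i(x) \leq r$ is exactly the condition $(x,r) \in \epi f_i$, the right-hand condition reads $(x,r) \in \epi f_i$ for every $i$, which is by definition membership in $\bigcap_{i=1}^k \epi f_i$. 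Chaining these equivalences yields $(x,r) \in \epi g \iff (x,r) \in \bigcap_{i=1}^k \epi f_i$, which is the claimed identity.

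One point worth addressing is the extended-real-valued setting, since each $f_i$ may take the value $+\infty$. The equivalence above remains valid when any $f_i(x) = +\infty$, because then both sides fail for every finite $r$: the maximum is $+\infty \not\leq r$, and the individual constraint $f_i(x) \leq r$ is violated. Thus no special casing is needed. I do not anticipate a genuine obstacle here; the statement is essentially the definitional translation of ``pointwise maximum'' into epigraph language, and the only care required is to handle the $+\infty$ values cleanly and to recall that this holds for a \emph{finite} intersection (so the maximum is attained and the elementary equivalence is unproblematic). Combined with Proposition~\ref{prop:intsec} iterated $k-1$ times, this immediately produces a \prep\ of $\epi \max(f_1,\ldots,f_k)$ from \prep s of the individual epigraphs, which is the reason the statement belongs in this calculus.
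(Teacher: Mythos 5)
Your argument is correct: the paper gives no proof of this proposition (it defers to standard Convex Analysis texts such as Rockafellar), and your definitional chain of equivalences, including the careful handling of the value $+\infty$, is exactly the standard argument those references employ.
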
  

The {\em lower closed convex envelope} $\lenv(f_1,\ldots,f_k)$
of given polyhedral functions
$f_1,\ldots,f_k \colon \R^n \rightarrow \R \cup
\left\{ +\infty\right\}$ is defined as the largest closed convex function
from $\R^n$ to $\R \cup \left\{ +\infty
\right\}$ majorized by all given functions.

\begin{proposition}
  Let
  $f_1,\ldots,f_k \colon \R^n \rightarrow \R \cup \left\{ + \infty
  \right\}$ be polyhedral functions. The epigraph of the lower
  closed convex envelope function $\lenv \left( f_1,\ldots,f_k \right)$ is
  \begin{align*}
    \epi \lenv \left( f_1,\ldots,f_k \right) =
    \cl\conv \left( \bigcup_{i=1}^k \epi f_i \right)\,.
  \end{align*} 
\end{proposition}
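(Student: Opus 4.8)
The plan is to reduce everything to the order‑reversing dictionary between extended‑real‑valued functions and their epigraphs. Recall the three standard facts: for $h_1,h_2\colon\R^n\to\R\cup\{+\infty\}$ we have $h_1\le h_2$ iff $\epi h_2\subseteq\epi h_1$; $h$ is convex iff $\epi h$ is convex; and $h$ is closed (lower semicontinuous) iff $\epi h$ is closed. Under this dictionary, \enquote{largest function majorized by every $f_i$} becomes \enquote{smallest epigraph containing every $\epi f_i$}, while \enquote{closed convex function} becomes \enquote{closed convex set}. Accordingly I would set $E:=\cl\conv\bigl(\bigcup_{i=1}^k\epi f_i\bigr)$ and prove that $E=\epi\lenv(f_1,\ldots,f_k)$.

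First I would check that $E$ really is the epigraph of a function. Each $\epi f_i$ is upward closed in the last coordinate, i.e. $(x,r)\in\epi f_i$ and $r'\ge r$ imply $(x,r')\in\epi f_i$. This property is inherited by the union trivially, by $\conv$ (distribute a nonnegative vertical shift across a convex combination), and by $\cl$ (approximate a boundary point and shift upward). Hence $E$ is upward closed, so $E=\epi h$ for $h(x):=\inf\set{r\mid(x,r)\in E}$. Since $E$ is closed by construction and convex as the closure of a convex hull, $h$ is a closed convex function.

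Next come the two inclusions. Because $\epi f_i\subseteq E=\epi h$ for every $i$, the dictionary yields $h\le f_i$, so $h$ is majorized by all the $f_i$. Conversely, let $\phi$ be any closed convex function with $\phi\le f_i$ for all $i$; then $\epi f_i\subseteq\epi\phi$, hence $\bigcup_{i=1}^k\epi f_i\subseteq\epi\phi$, and since $\epi\phi$ is a closed convex set it follows that $E=\cl\conv\bigl(\bigcup_{i=1}^k\epi f_i\bigr)\subseteq\epi\phi$, i.e. $\phi\le h$. Thus $h$ is the largest closed convex function majorized by every $f_i$, which is exactly $\lenv(f_1,\ldots,f_k)$, and therefore $\epi\lenv(f_1,\ldots,f_k)=E$.

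The main obstacle is the well‑definedness issue concealed in the first step: I must make sure $h$ maps into $\R\cup\{+\infty\}$ and does not take the value $-\infty$, since only such functions are admissible in the definition of $\lenv$. A cautionary instance is $f_1(x)=x$, $f_2(x)=-x$ on $\R$, where $E$ becomes all of $\R^2$. This is exactly where I would invoke the polyhedral hypothesis: $E$ is the closed convex hull of finitely many polyhedra, hence again a polyhedron, and I would argue that $E$ contains no downward vertical ray precisely when the $f_i$ admit a common affine minorant, which survives passage through $\conv$ and $\cl$. Establishing this exclusion of $-\infty$ (or, in the genuinely improper case, interpreting the statement within the extended framework) is the only delicate ingredient; the remainder is the formal epigraph bookkeeping carried out above.
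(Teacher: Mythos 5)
The paper contains no proof of this proposition: it is one of the four epigraph identities for which the authors simply write that proofs can be found in Rockafellar, so there is no in-paper argument to compare against. Your proof is the standard textbook one, and it is sound: the order-reversing epigraph dictionary, the observation that upward closedness in the last coordinate survives union, convex hull and closure (so that $E=\cl\conv\bigl(\bigcup_{i=1}^k\epi f_i\bigr)$ is again an epigraph), and the two inclusions, where the minimality step $\epi f_i\subseteq\epi\phi\Rightarrow E\subseteq\epi\phi$ is exactly where closedness and convexity of $\epi\phi$ are used.

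The one delicate point is the one you flag yourself, and it is worth stressing that it is a gap in the \emph{statement} rather than in your argument: with $\lenv$ defined, as in the paper, as the largest closed convex function \emph{into} $\R\cup\{+\infty\}$ majorized by all $f_i$, the object need not exist. Your example $f_1(x)=x$, $f_2(x)=-x$ is decisive, since any convex $\phi\le-\lvert x\rvert$ satisfies $\phi(0)\le\tfrac12\bigl(\phi(t)+\phi(-t)\bigr)\le-t$ for all $t>0$, so no admissible $\phi$ exists and the proposition must tacitly assume a common affine minorant or be read in the improper framework. Your proposed repair closes the proper case and can be finished in two lines: since $E$ is closed and convex, either $(0,-1)\in\recc E$, in which case the infimum function $h$ takes the value $-\infty$ (the improper case), or $(0,-1)\notin\recc E$, in which case $h(x)>-\infty$ for \emph{every} $x$, because recession directions of a nonempty closed convex set do not depend on the base point; and $(0,-1)\notin\recc E$ holds precisely when the $f_i$ admit a common affine minorant (one direction via $E\subseteq\epi\ell$, the other because a proper closed convex $h$ has an affine minorant $\ell\le h\le f_i$). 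Note that this dichotomy uses only closedness and convexity of $E$, so the polyhedral hypothesis is not needed for the identity itself; it only guarantees in addition that $E$, hence $\lenv(f_1,\ldots,f_k)$, is polyhedral, which is what the paper's calculus actually exploits.
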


The {\em infimal convolution} of polyhedral functions
  $f_1,\ldots,f_k \colon \R^n \rightarrow \R \cup \left\{ + \infty
  \right\}$  is defined as
\begin{align*}
  \left( f_1 \iconv \dots \iconv f_k  \right)(x) =
  \inf \left\{ f_1(x^1)+ \dots + f_k(x^k) \;|\;
  x^1 + \dots + x^k = x \right\}\text{.}
\end{align*}

\begin{proposition}
  Let
  $f_1,\ldots,f_k \colon \R^n \rightarrow \R \cup \left\{ + \infty
  \right\}$ be polyhedral functions. Then we have
  \begin{align*}
    \epi \left( f_1 \iconv \dots \iconv f_k \right) =
    \epi f_1 + \dots + \epi f_k \text{.}
  \end{align*}
\end{proposition}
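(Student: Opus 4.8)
The plan is to prove the set equality by establishing both inclusions, the polyhedral hypothesis entering only in the reverse direction through an attainment argument. Throughout write $P := \epi f_1 + \dots + \epi f_k$ and $f := f_1 \iconv \dots \iconv f_k$, so that the goal becomes $P = \epi f$. First I would simply unfold the Minkowski sum: a pair $(x,r)$ lies in $P$ exactly when there are $x^1,\dots,x^k \in \R^n$ and $r^1,\dots,r^k \in \R$ with $\sum_i x^i = x$, $\sum_i r^i = r$ and $f_i(x^i) \leq r^i$ for each $i$. Since $f_i(x^i) \leq r^i$ forces $\sum_i f_i(x^i) \leq \sum_i r^i = r$, the definition of the infimal convolution gives $f(x) \leq r$, that is $(x,r) \in \epi f$. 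This yields the easy inclusion $P \subseteq \epi f$ and uses no polyhedrality. The same unfolding shows that $f(x) = \inf\{\, r : (x,r) \in P \,\}$ for every $x$, and that $P$ is upward closed in the last coordinate (replacing $r^1$ by $r^1 + t$ with $t \geq 0$ keeps $(x^1, r^1+t) \in \epi f_1$).

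For the reverse inclusion I would exploit that $P$ is a closed polyhedron. Each $\epi f_i$ is a polyhedron by the definition of a polyhedral function, so by Proposition \ref{prop:sum} (applied $k-1$ times) $P$ is again a polyhedron; and since every polyhedron admits an H-representation by Fourier--Motzkin elimination, as noted after Definition \ref{def:polyhedron}, it is an intersection of finitely many closed half-spaces and hence closed. Now take $(x,r) \in \epi f$, so $f(x) \leq r$. If $f(x)$ is finite, pick $s_n \downarrow f(x)$ with $(x,s_n) \in P$; the points $(x,s_n)$ converge to $(x,f(x))$, which therefore lies in the closed set $P$, and upward closedness then gives $(x,r) \in P$ for all $r \geq f(x)$. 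The degenerate cases are immediate: if $f(x) = +\infty$ there is nothing to verify, while if $f(x) = -\infty$ the fibre $\{\, r : (x,r) \in P \,\}$ is unbounded below and upward closed, hence equals $\R$, so again $(x,r) \in P$. Combining the two inclusions gives $P = \epi f$.

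The main obstacle is precisely the attainment of the infimum in the infimal convolution, i.e.\ securing $(x,f(x)) \in P$ whenever $f(x)$ is finite. For arbitrary convex functions this can fail, and one obtains only $P \subseteq \epi f \subseteq \cl P$, so that equality holds after taking a closure. What rescues the statement here is that the Minkowski sum of polyhedra is itself a polyhedron and hence already closed, so no minimizing sequence can escape and the infimum is attained wherever it is finite. Everything else — the unfolding of the Minkowski sum and the monotonicity in the $r$-coordinate — is routine and purely formal.
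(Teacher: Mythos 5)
Your proof is correct. Note that the paper itself gives no proof of this proposition --- it defers to standard Convex Analysis references --- so your argument is a self-contained replacement for the textbook proof rather than a parallel to one in the text. Both inclusions are handled properly: the unfolding of the Minkowski sum giving $\epi f_1 + \dots + \epi f_k \subseteq \epi\left(f_1 \iconv \dots \iconv f_k\right)$ together with the identity $\left(f_1 \iconv \dots \iconv f_k\right)(x) = \inf\setE{r}{(x,r) \in P}$ is routine and valid, and you correctly isolate the one genuinely nontrivial point: for general convex functions the sum of epigraphs need only be sandwiched between $\epi f$ and its closure, and it is polyhedrality that rescues exactness, since the Minkowski sum of polyhedra is again a polyhedron (Proposition \ref{prop:sum}, or Fourier--Motzkin as noted after Definition \ref{def:polyhedron}) and hence closed, forcing attainment of the infimum wherever it is finite. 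Your treatment of the degenerate fibres ($f(x) = +\infty$ vacuous; $f(x) = -\infty$ handled by upward closedness of an unbounded-below fibre) is also sound. This is essentially the classical argument (cf.\ Rockafellar, Corollary 19.3.4, where attainment for polyhedral infimal convolutions is derived from closedness of polyhedral sums), so the proof is both correct and the expected route; a minor stylistic point is that closedness plus upward closedness of the fibre makes the sequential limit argument slightly more explicit than necessary, since the fibre is a nonempty closed interval unbounded above whose infimum is $f(x)$, but this changes nothing of substance.
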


\begin{proposition}
  Let
  $f_1,\ldots,f_k \colon \R^n \rightarrow \R \cup \left\{ + \infty
  \right\}$ be polyhedral functions. For the pointwise sum function
  $f_1+\ldots+f_k$, one has 
  \begin{align*}
    \epi \left(f_1+\ldots+f_k \right) = \left\{\left(x , r \right)
	\in \R^n \times \R \;\bigg|\;
   \sum_{i=1}^k r_i=r,\; (x,r_i) \in \epi f_i,\; i=1,\ldots,k \right\} \text{.}
  \end{align*}
\end{proposition}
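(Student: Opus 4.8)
The plan is to prove the identity by a direct double inclusion between the epigraph of the pointwise sum on the left and the ``budget-splitting'' set on the right. No polyhedral structure is actually needed for this step: the equality is a general fact about functions with codomain $\R \cup \{+\infty\}$, so I would phrase the argument entirely in terms of the defining inequalities $f_i(x) \leq r_i$, with polyhedrality mattering only insofar as it guarantees (via the results of Section~\ref{sec:polyh}, applied to the intersection-and-sum constructions on epigraphs) that the right-hand set is again the epigraph of a polyhedral function.

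For the inclusion $\subseteq$ I would fix $(x,r) \in \R^n \times \R$ with $(x,r) \in \epi(f_1 + \ldots + f_k)$, which by definition means $f_1(x) + \ldots + f_k(x) \leq r$. Since $r$ is finite and each summand lies in $\R \cup \{+\infty\}$, this inequality forces every $f_i(x)$ to be finite (were a single $f_i(x) = +\infty$, the whole sum would be $+\infty > r$). Having secured finiteness, I would split the ``budget'' $r$ by setting $r_i = f_i(x)$ for $i = 1, \ldots, k-1$ and $r_k = r - \sum_{i=1}^{k-1} f_i(x)$. Then $\sum_i r_i = r$ by construction, the conditions $f_i(x) \leq r_i$ hold trivially for $i < k$, and $f_k(x) \leq r_k$ is exactly a rearrangement of the hypothesis, so $(x, r_i) \in \epi f_i$ for every $i$ and $(x,r)$ lies in the right-hand set.

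The reverse inclusion $\supseteq$ is the easy direction: taking $(x,r)$ in the right-hand set, there are reals $r_1, \ldots, r_k$ with $\sum_i r_i = r$ and $f_i(x) \leq r_i$ for each $i$; summing these $k$ inequalities gives $\sum_i f_i(x) \leq \sum_i r_i = r$, i.e.\ $(x,r) \in \epi(f_1 + \ldots + f_k)$. Combining the two inclusions yields the asserted equality.

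The only point demanding care --- rather than a genuine obstacle --- is the extended-real arithmetic in the codomain. The one substantive observation is that a finite upper bound on the sum propagates to finiteness of each individual summand, which is what makes the splitting $r_i = f_i(x)$ in the $\subseteq$ direction well-posed; this also transparently handles the case $x \notin \dom f_i$, where both sides are vacuous. Because the codomain excludes $-\infty$, no indeterminate form $(+\infty) + (-\infty)$ can arise, so the summation and splitting steps require no case distinctions and the argument goes through cleanly.
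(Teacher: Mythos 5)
Your proof is correct. The paper itself gives no argument for this proposition---it defers to standard Convex Analysis texts such as Rockafellar---so there is no in-paper proof to diverge from; your double inclusion is the standard argument, and it is complete: the one point requiring care, that a finite bound $f_1(x)+\ldots+f_k(x) \leq r$ forces each $f_i(x)$ to be finite (which makes the splitting $r_i = f_i(x)$ for $i<k$, $r_k = r - \sum_{i=1}^{k-1} f_i(x)$ well-posed, and which you correctly note cannot be spoiled by indeterminate forms since $-\infty$ is excluded from the codomain), is exactly the observation you supply.
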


The {\em conjugate} $f^*:\R^n \rightarrow \R \cup \left\{ + \infty
  \right\}$ of a polyhedral convex
function $f: \R^n \rightarrow \R \cup \left\{ + \infty
  \right\}$ with $\dom f \neq \emptyset$ is defined as
\begin{align*}
  f^*(x^*) = \sup_{x \in \dom f} \{ x^\intercal x^* -f(x) \}\text{.}
\end{align*}
The following result tells us that a \prep\ of the the epigraph of $f^*$ 
can be obtained from a \prep\ of the epigraph of $f$ by polyhedral calculus
operations as discussed in Section \ref{sec:polyh}. 

\begin{proposition}
  Let $f \colon \R^n \rightarrow \R \cup \left\{ +\infty \right\}$ be
  a polyhedral function with $\dom f \neq \emptyset$. Then,
  \begin{align*}
    \epi f^* = \left\{ (x^*,r^*) \; | \; (x^*,-1,r^*) \in  K(f)^* \right\} \,,
  \end{align*}
  where $K(f)^*$ is the polar cone of the polyhedron
  \begin{align*}
    K(f) = \left\{ (x,r,-1) \; | \; (x,r) \in \epi f \right\}\,.
  \end{align*}
\end{proposition}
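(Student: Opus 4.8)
The plan is to prove the claimed set equality by a direct argument that reduces \emph{both} sides to one and the same elementary condition indexed over the points of $\epi f$. No deep theorem is needed: the entire content is unwinding the definition of the conjugate on the one side and the definition of the polar cone on the other, and observing that they match after fixing the middle coordinate to $-1$.

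First I would rewrite the epigraph of the conjugate. By definition $(x^*,r^*) \in \epi f^*$ iff $f^*(x^*) \leq r^*$, i.e.
\[
  \sup_{x \in \dom f}\left( x^\intercal x^* - f(x)\right) \leq r^*,
\]
which holds iff $x^\intercal x^* - f(x) \leq r^*$ for every $x \in \dom f$. The key reformulation is to replace this supremum over $\dom f$ by a condition ranging over all of $\epi f$: since $(x,r)\in\epi f$ means $x\in\dom f$ and $f(x)\leq r$, the requirement that $x^\intercal x^* - r \leq r^*$ hold for all $(x,r)\in\epi f$ is equivalent to the previous one — the bound is tightest at $r=f(x)$, while any $r\geq f(x)$ only weakens the left-hand side. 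Hence $(x^*,r^*)\in\epi f^*$ iff $x^\intercal x^* - r \leq r^*$ for all $(x,r)\in\epi f$.

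Next I would unwind the right-hand side. Writing a generic point of $K(f)^*$ as $(x^*,s,t)\in\R^n\times\R\times\R$, the defining condition of the polar cone is $x^\intercal x^* + s\,r - t \leq 0$ for every $(x,r,-1)\in K(f)$, that is, for every $(x,r)\in\epi f$. Specializing to $s=-1$ and $t=r^*$, this reads $x^\intercal x^* - r - r^* \leq 0$ for all $(x,r)\in\epi f$, which is exactly the condition obtained for $\epi f^*$ above. Therefore $(x^*,-1,r^*)\in K(f)^*$ if and only if $(x^*,r^*)\in\epi f^*$, which is the asserted equality.

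The argument is essentially bookkeeping, so the only place requiring care — and what I would treat as the main obstacle — is the passage between the supremum over $\dom f$ and the universally quantified inequality over $\epi f$, where one must verify that both the ``$\sup\leq r^*$'' rewriting and the substitution of the epigraph constraint are genuine equivalences rather than one-sided implications. The hypothesis $\dom f\neq\emptyset$ enters only to ensure that $f^*$ is proper, so that $\epi f^*$ is the epigraph of a genuine function and the supremum is not vacuously $-\infty$; the set identity itself follows from the definitional matching above. Finally I would note that $K(f)=\epi f\times\{-1\}$ is again a polyhedron, so that $K(f)^*$ is polyhedral and a \prep\ of it is furnished by Proposition \ref{prop:polarcone}, confirming that $\epi f^*$ is obtained from $\epi f$ purely by the polyhedral calculus operations of Section \ref{sec:polyh}.
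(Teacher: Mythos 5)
Your proof is correct and takes essentially the same route as the paper's: both unwind $\epi f^*$ via the supremum definition into a universally quantified inequality over $\dom f$, pass to the equivalent condition over $\epi f$ (the step you rightly single out, valid because the constraint is tightest at $r=f(x)$), and identify the result with the polar-cone inequality for $K(f)$ specialized to middle coordinate $-1$. The paper presents this as a single chain of set equalities, but the content is identical to your argument.
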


\begin{proof}
  The epigraph of $f^*$ is the set
  \begin{align*}
    \epi f^* &=\left\{ (x^*,r^*) \; | \; f^*(x^*) \leq r^* \right\} \\
             &=\left\{ (x^*,r^*) \; | \;  \sup_{x \in \dom f}
               \{x^\intercal x^* -f(x)\} \leq r^*\right\}\\
             &=\left\{ (x^*,r^*) \; | \;  \forall \, x \in \dom f
                \; : \; x^\intercal x^* -f(x) \leq r^*\right\}\\
             &=\left\{ (x^*,r^*) \; | \; \forall \, (x,r) \in
               \epi f \; : \; x^\intercal x^* -r \leq r^* \right\}\\
             &=\left\{ (x^*,r^*) \; | \; \forall \, (x,r,-1) \in K(f) \; : \; 
                x^\intercal x^* +r (-1) + (-1) r^* \leq 0 \right\}\\
             &=\left\{ (x^*,r^*) \; | \; (x^*,-1,r^*)
                \in  K(f)^* \right\} \text{,}
  \end{align*}
  which completes the proof.
\end{proof}

Further operations for polyhedral functions can
be obtained in a similar manner. 
For instance, the ability to compute the normal cone of
a P-represented epigraph of
a polyhedral function $f$ can be used to compute a \prep\ of the
(convex) subdifferential of $f$ at a point $x \in \dom f$
by using the well-known formula
\[ \partial f(x) = \left\{ y \in \R^n \;\bigg|\; \begin{pmatrix}y \\ -1 
\end{pmatrix} \in \ncone_{\epi f}\begin{pmatrix}
	x\\f(x)
\end{pmatrix}\right\} \text{.}\]

\section{Computing projections via MOLP}\label{sec:proj}

In this section we briefly outline how to compute the \vrep\ and \hrep\ of a polyhedron given in \prep. For further details concerning the theoretical background the reader is referred to \cite{equivalence_paper,diss_benni}. The main idea is to solve a multiple objective linear program (MOLP) connected to the projection problem. We can then derive the \vrep\ and \hrep\ of the projected polyhedron from the primal and dual solution of the corresponding MOLP.

In multiple objective linear programming , the so-called
upper image plays an important role. The upper image of 
\begin{align}\label{molp} \tag{MOLP}
	\min M x \quad \text{s.t.}\quad a \leq B x \leq b,\; l \leq x \leq u
\end{align}
is the polyhedron
\begin{align*}
	\P = \{ y \in \R^q \mid \exists x \in \R^n :\;y
	\geq M x,\; a \leq Bx \leq b,\;
	 l \leq x \leq u\}\text{.}
\end{align*}
Algorithms for \eqref{molp} like Benson's algorithm 
\cite{benson,benson_type} compute
both a \vrep\ and an \hrep\ of $\P$. A solution to \eqref{molp}
as introduced in \cite{buch_andreas} is closely related to
a V-representation of $\P$, whereas a solution of the dual
problem in the sense of \cite{geom_dual} refers in the same manner to
an \hrep\ of $\P$.

The main idea of computing a \vrep\ and an \hrep\ from a given \prep\ 
$(M,B,a,b,l,u)$ of a polyhedron $A\subseteq \R^q$ is to consider the problem
\begin{align}\label{molp1}\tag{MOLP'}
	\min \begin{pmatrix}
		M\\
		-1^\transpose_{(q)} M
	\end{pmatrix} x \quad \text{s.t.}\quad a \leq B x
	 \leq b,\; l \leq x \leq u 
\end{align}
with upper image
\begin{align*}
	\M \!=\! \{ (y,r) \in \R^q \!\times\! \R |\,
	\exists x \in \R^n\!:\, y \geq M x,\, r \geq
	 -1^\transpose_{(q)} M x,\;   a \leq Bx \leq b,\,
	  l \leq x \leq u\}\text{.}
\end{align*}
It is easily seen that the polyhedron $A$ can be expressed by $\M$ as
\begin{equation} \label{eq:lift}
\begin{aligned} 
	A &= \{ y \in \R^q \mid \exists x \in \R^n:\;
	y = Mx ,\; a \leq Bx \leq b,\;
		 l \leq x \leq u\} \\
	&= \left\{ y \in \R^q \;\bigg|\; \begin{pmatrix}
		y\\
		-1^\transpose_{(q)} y \end{pmatrix} \in \M \right\} 
		\text{.} 
\end{aligned}
\end{equation}

From the \vrep\ and the \hrep\ of $\M$, which are
obtained by solving \eqref{molp1}, one can compute a
\vrep\ and an \hrep\ of the polyhedron $A$. Considering (\ref{eq:lift}) and \cite[Theorem 3]{equivalence_paper}, a \vrep\ of $A$ is obtained from a \vrep\ of $\M$ by deleting directions $d$ with $1^\transpose_{(q+1)} d \neq 0$ and by omitting the $(q+1)$-th components of all remaining vectors.

An \hrep\ of $A$ is easily generated by the \hrep\ of $\mathcal{M}$ employing (\ref{eq:lift}). If the tuple $\left( B^1,a^1,b^1,l^1,u^1 \right)$ is an \hrep\ of $\mathcal{M}$ comprising $2m$ affine inequalities, i.e.\ $B^1 \in \mathbb{R}^{m\times (q+1)}$, then, by replacing the last component of the unknown by the negative sum of the first $q$ components we obtain the \hrep\ of $A$ satisfying the system of inequalities given by
\begin{align*}
\begin{pmatrix}
a^1 \\ l^1_{q+1}
\end{pmatrix} \leq \begin{pmatrix}
B^1_{11}-B^1_{1(q+1)} & \hdots & B^1_{1q}-B^1_{1(q+1)} \\
\vdots & \ddots & \vdots  \\
B^1_{m1}-B^1_{m(q+1)} & \hdots & B^1_{mq}-B^1_{m(q+1)} \\
-1 & \hdots & -1 
\end{pmatrix} y \leq \begin{pmatrix}
b^1 \\ u^1_{q+1} 
\end{pmatrix} ,
\end{align*}
and 
\begin{align*}
 \begin{pmatrix} 
l^1_1 \\ \vdots \\l^1_q 
\end{pmatrix} \leq y\leq \begin{pmatrix}
u^1_1 \\ \vdots \\u^1_q
\end{pmatrix} \, .
\end{align*}     

\section{Numerical experiments}\label{sec:num}
The goal of this section is twofold. First we consider an example from
locational analysis in order to demonstrate how polyhedral calculus
can be used for modeling polyhedral convex optimization problems.
Secondly, we compare our implementation {\em bensolve tools}
\cite{bensolve_tools} with another polyhedral calculus software
by projecting high dimensional polyhedra.

{\em Bensolve tools} is a free and open source software for GNU Octave and
Matlab. It utilizes the VLP solver {\em bensolve} \cite{bensolve}, which is
written in C programming language. The recent version of {\em bensolve tools}
\cite{bensolve_tools} has the following features:
\begin{itemize}
	\item calculus of convex polyhedra,
	\item calculus of polyhedral convex function,
	\item solver for polyhedral convex programs (via LP reformulation),
	\item solver for vector linear programs and
	 multiple objective linear programs ({\em bensolve} interface),
	\item solver for quasi-convace global optimization problems,
	 see \cite{global} for details.
\end{itemize}

\subsection{Polyhedral location problems}

Let a finite number of points $a^1,\ldots,a^m \in \R^n$
be given and let  $d \colon \R^n \times \R^n \rightarrow \R$ be a metric.
We consider the location optimization problem
\begin{align}\label{eq:loc}
\min_{x \in \R^n} \quad \sum_{i=1}^m d(x,a^i) \text{.}
\end{align}
Let $G_i \subseteq \mathbb{R}^n$ be bounded polyhedra with
$ 0 \in \inter G_i$  and let 
$g_i \colon \mathbb{R}^n \rightarrow \mathbb{R}$
be the corresponding {\em gauge function}, which can be defined by
\begin{align*}
 \epi g_i = \cone \left( G_i \times \left\{ 1 \right\}  \right) \text{.}
\end{align*}
Then $g_i$ is a polyhedral convex function and $d(z,y)=g_i(z-y)$ is a metric.

The distance from $x$ to $a^i$ can be expressed by a function
$f_i \colon \mathbb{R}^n \rightarrow \mathbb{R}$, 
$f_i (x)= g_i(x-a^i)$. Its epigraph is
\begin{align*}
\epi f_i = \epi g_i + \begin{pmatrix} a^i \\ 0 \end{pmatrix} \text{.}
\end{align*}
The location problem \eqref{eq:loc} can be written as
\begin{align}\label{eq:loc1}
\min_{x \in \R^n} \sum_{i=1}^m f_i(x) \text{.}
\end{align}
Now it is evident that a \prep\ $(M,B,a,b,l,u)$ of the objective
function $f \colon \mathbb{R}^n \rightarrow \mathbb{R}$,
$f(x) = \sum_{i=1}^m f_i(x)$
can be obtained by the polyhedral calculus
operations discussed above.
Thus, if $(x^*,r^*,z^*)$ is a solution of the linear program
\begin{align*}
	\min_{x,r,z} r \quad \text{s.t.}\quad \begin{pmatrix} x\\r\end{pmatrix}
	=Mz,\; a \leq Bz \leq b,\; l \leq z \leq u \text{,}
\end{align*}
then $x^*$ is an optimal solution of \eqref{eq:loc1}.

\begin{example}
Let $B_1$ be the unit ball of the 1-norm and $B_{\infty}$
be the unit ball of the $\infty$-norm. For all $i$, we set
$G_i = B_1+B_{\infty}$, see 
Figure~\ref{fig:minkowski_sum} in Section~\ref{sec:polyh} for the case $n=3$.
The points $a^i$ are generated randomly on a grid.
The resulting problem instances are solved by \emph{bensolve tools}.
To this end, 
the objective function $f$ is composed from the data and then 
the integrated solver for polyhedral convex programs is used.
The set of {\em all} solutions of \eqref{eq:loc1}
can be obtained in different ways, here it is obtained by
computing the subdifferential of the conjugate of $f$ at $0$
using the corresponding {\em bensolve tools} commands.
The results are illustrated in Figures \ref{im:location_2d} and
\ref{im:location_3d}.
\begin{figure}[hpt]
\includegraphics[width=0.47\textwidth]{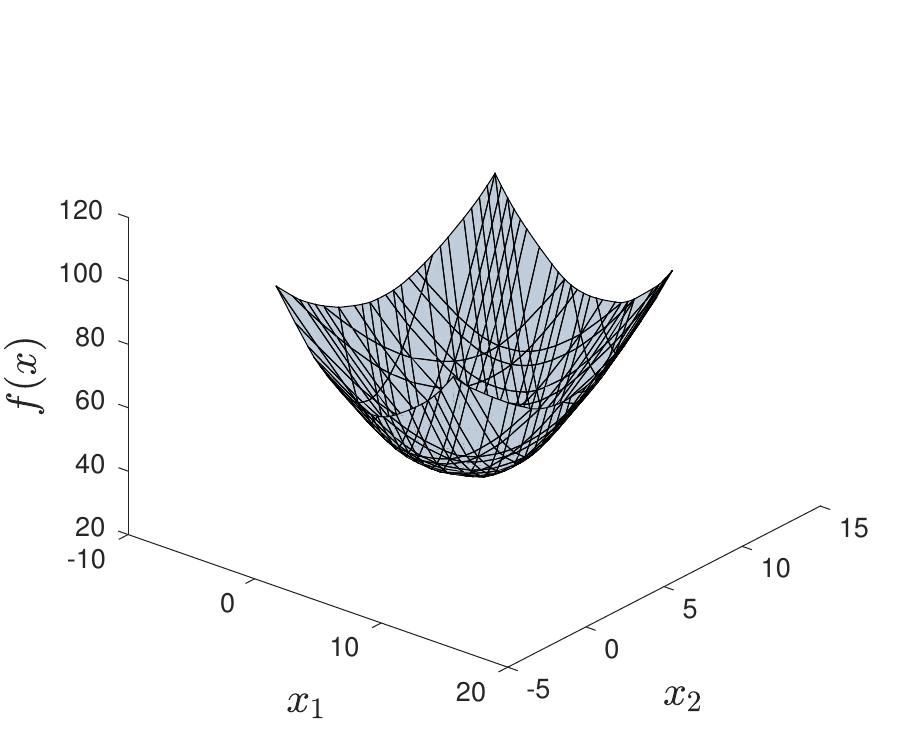}
\includegraphics[width=0.47\textwidth,trim={2cm 8cm 2cm 8cm},clip]{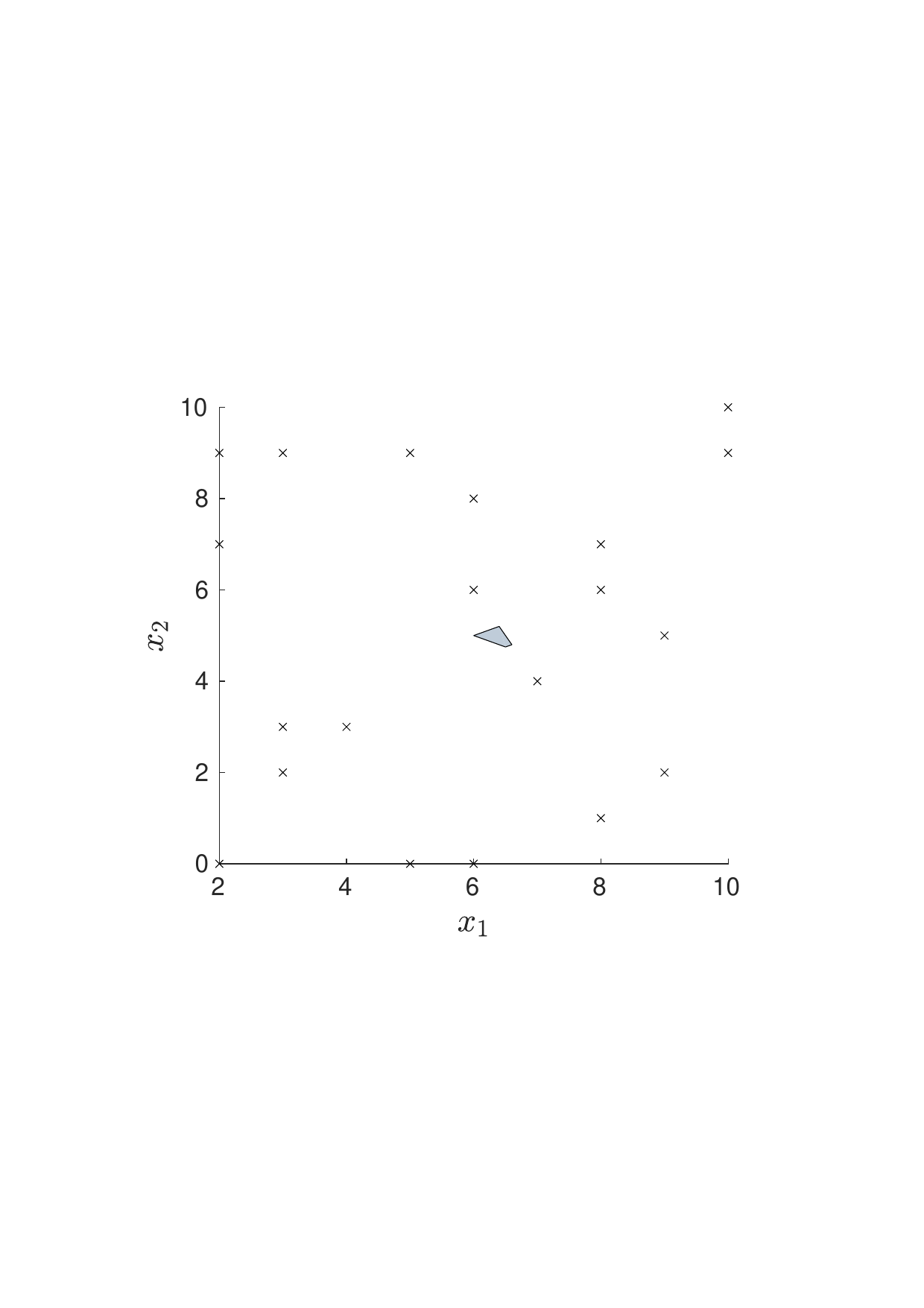}
\caption{Left: the epigraph of $f$; Right: the given points $a^i$ 
and the set of all optimal solutions.}
\label{im:location_2d}
\end{figure}

\begin{figure}[hpt]
\includegraphics[width=0.47\textwidth]{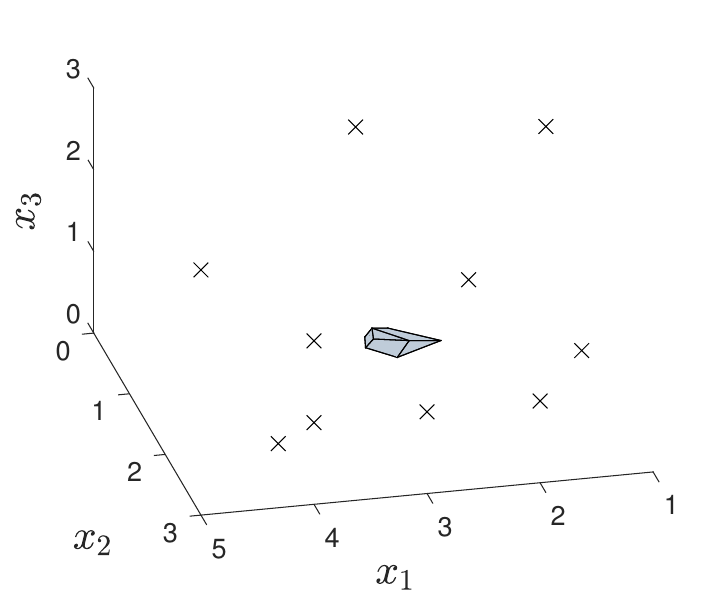}
\includegraphics[width=0.47\textwidth]{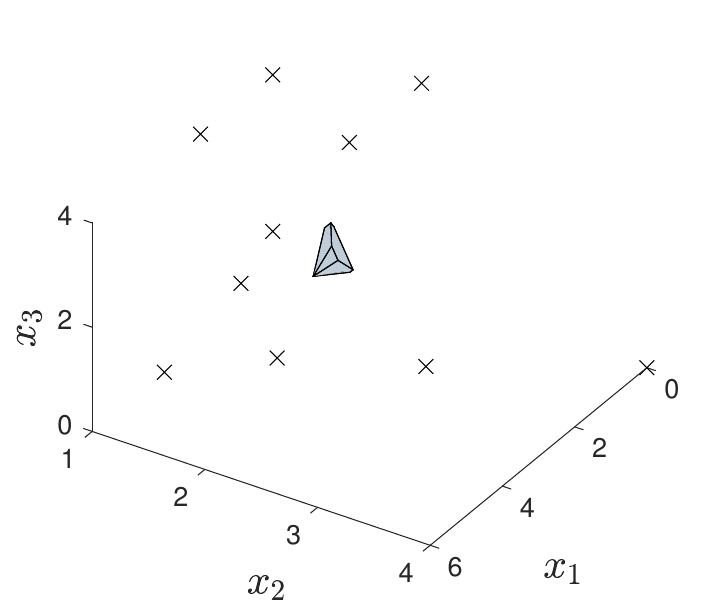}
\caption{The given points $a^i$ 
and the set of all optimal solutions for two instances of the case $n=3$.}
\label{im:location_3d}
\end{figure}
\end{example}

\subsection{Projection of high dimensional polyhedra}

The preceding results show that polyhedral projection is a key tool
for polyhedral calculus. Thus, any projection algorithm computing a
\vrep\ and an \hrep\ can be employed to carry out polyhedral calculus
operations.  While \emph{bensolve tools} uses the projection algorithm
from Section~\ref{sec:proj} which works in the image space $\R^q$,
MPT3 uses a projection technique which works in the variable space
$\R^n$.  The P-represented polyhedra arising in the polyhedral
calculus framework typically encode a projection from a
high-dimensional variable space into a low-dimensional image space,
i.e.\ $q \ll n$.  Thus, with projection algorithms which work in this
low-dimensional image space, one will achieve better results than
using algorithms which work in the high-dimensional variable space
$\R^n$.  In order to support this claim, we compare the performances
of \emph{bensolve tools} and the multi-parametric toolbox MPT3
\cite{MPT3} for randomly generated projection problems in
Example~\ref{ex:proj}.\par
It should be noted that MPT3 has several features such as calculus
operations for nonconvex polyhedra, which are not covered by the
recent version of {\em bensolve tools}.  Moreover, MPT3 provides
different projection algorithms, which might be favorable for other
examples.

\begin{example} \label{ex:proj} Consider an \hrep\
  $(B,a,\emptyset,\emptyset,\emptyset)$ of a polyhedron
  $P = \left\{ x \in \R^n \; | \; Bx \geq a \right\}$ which consists
  of $m=3n$ constraints.  Let the matrix
  $B \in \mathbb{R}^{m \times n}$ consist of uniformly distributed
  (pseudo-)random numbers out of the interval
  $\left[ -1/2 , 1/2 \right] $.  We determine the vector
  $a \in \mathbb{R}^m$ such that the $n$-dimensional simplex $S$ is
  contained in $P$.  This is achieved by setting $a_i$ to the minimum
  of the $i$-th row of the matrix
  $\begin{pmatrix} B , 0_{(n)} \end{pmatrix}$.  We project $P$ onto
  its first $q$ components.  Then, the H- as well as the
  V-representation of the resulting polyhedron are calculated.  Figure
  \ref{fig:num_projection} shows numerical results of different
  instances of this problem computed with \emph{bensolve tools} and
  MPT3.

  \begin{figure}[hpt]
    \includegraphics[width=0.31\textwidth]{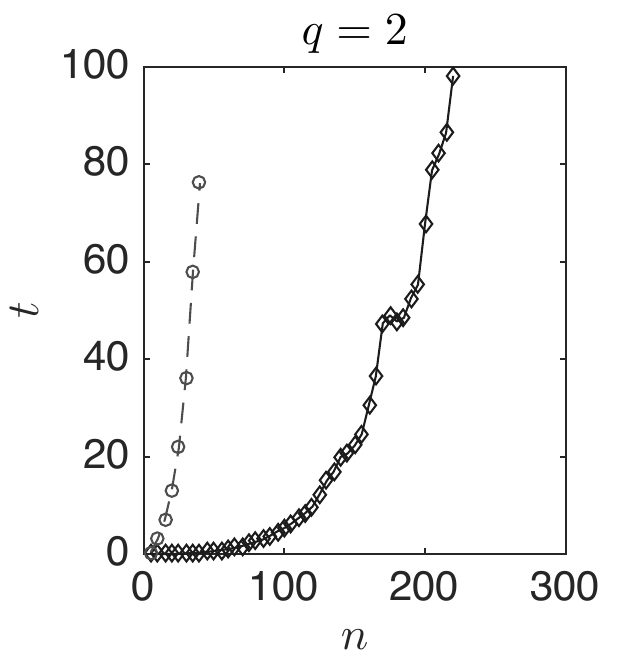}
    \includegraphics[width=0.31\textwidth]{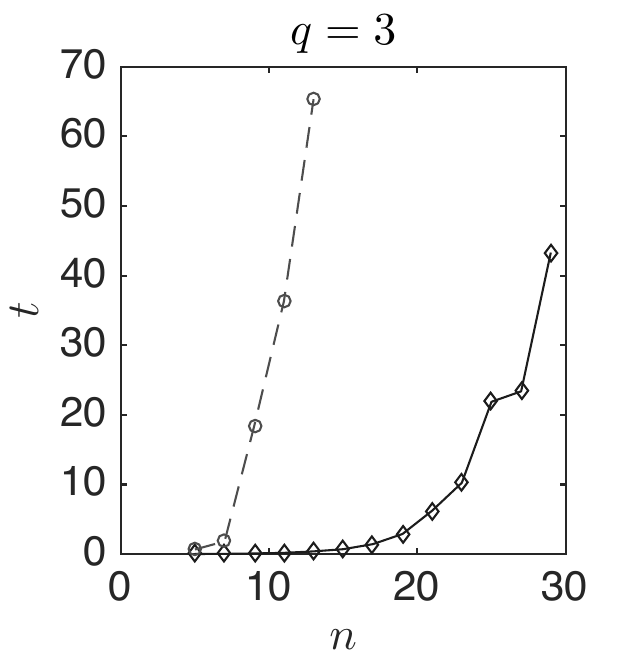}
    \includegraphics[width=0.31\textwidth]{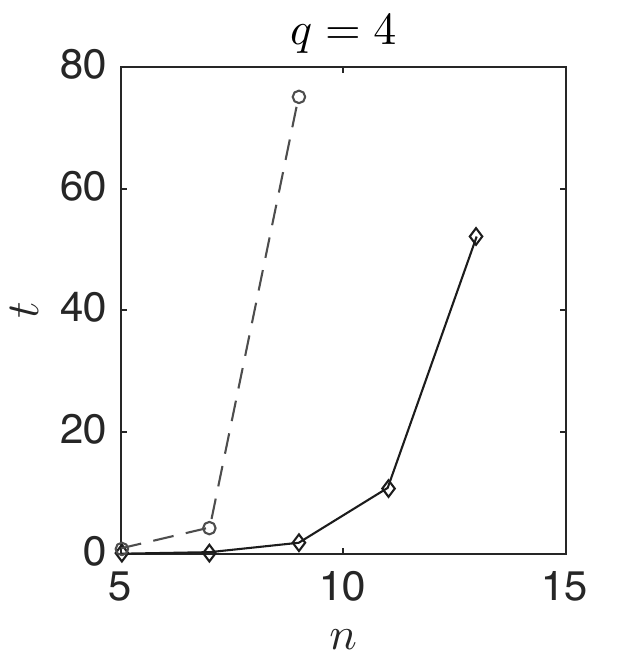}
    \caption{Comparison of {\em bensolve tools} (solid line) and
      MPT3 \cite{MPT3} (dashed line). The average CPU time $t$ in
      seconds of 10 random instances of Example \ref{ex:proj} is displayed.
      The CPU time per instance is limited to 100 seconds. The projection method in
      MPT3 is run with option {\tt 'mplp'} to achieve the best
      performance. The standard options of {\em bensolve tools} are used.}
    \label{fig:num_projection}
  \end{figure}
\end{example}

\section{Conclusions}

In this article, we show how \prep s can be used to perform polyhedral
calculus in an efficient and straight-forward manner. We demonstrate how
the results for polyhedral calculus can be applied to calculus for
polyhedral convex functions. Moreover, we provide methods
and the software {\em bensolve tools} for polyhedral calculus
based on the MOLP solver {\em bensolve}.


\end{document}